\documentclass[a4paper, reqno]{amsart}
\usepackage{amsmath} 
\usepackage{amssymb}
\usepackage{amsthm}
\usepackage{mathrsfs}
\usepackage{mathtools}
\usepackage{graphicx} 
\usepackage{tikz}
\usepackage{tikz-cd}
\usetikzlibrary{commutative-diagrams}
\usepackage{enumitem}
\usepackage{float}
\usepackage{hyperref}
\usepackage{geometry}
\usepackage{adjustbox}
\usepackage{caption}
\usetikzlibrary{calc}
\usepackage[normalem]{ulem}

\theoremstyle{plain}
\newtheorem{theorem}{Theorem}[section]
\newtheorem{corollary}[theorem]{Corollary}
\newtheorem{lemma}[theorem]{Lemma}
\newtheorem{proposition}[theorem]{Proposition}
\theoremstyle{definition}
\newtheorem{definition}[theorem]{Definition}
\newtheorem{remark}[theorem]{Remark}
\newtheorem{example}[theorem]{Example}

\allowdisplaybreaks[2]
\numberwithin{equation}{section}
\numberwithin{figure}{section}
\numberwithin{table}{section}

\begin{document}
\title{Equivariant bordism rigidity for toric manifolds}

\author{Runze Chen}
\address{School of Mathematical Sciences, Fudan University, Shanghai 200433, China}
\email{20110180032@fudan.edu.cn}

\author{Yuanxin Guan}
\address{School of Mathematical Sciences, Fudan University, Shanghai 200433, China}
\email{yxguan21@m.fudan.edu.cn}

\author{Zhi L\"u}
\address{School of Mathematical Sciences, Fudan University, Shanghai 200433, China}
\email{zlu@fudan.edu.cn}

\begin{abstract}
    In this paper, we develop a bordism-theoretic approach to rigidity problems for toric and quasitoric manifolds. 
    We prove that two toric manifolds are isomorphic as varieties if and only if they are weakly equivariantly unitary bordant.
    We also establish a parallel rigidity result for omnioriented quasitoric manifolds satisfying the injectivity condition, showing that their equivariant unitary bordism classes completely determine their omniorientation-preserving equivariant homeomorphism types. 
    Thus, equivariant bordism provides a topological framework for detecting geometric and combinatorial rigidity.
\end{abstract}

\subjclass[2020]{
    57R85, 
	55N22, 
    14M25, 
    57S12
}

\keywords{Equivariant unitary bordism, equivariant unoriented bordism, toric manifolds, quasitoric manifolds, small covers}

\thanks{Partially supported by the grants from NSFC 11971112 and SIMIS-ID-2025-TP}

\maketitle

\section{Introduction}
Toric varieties are a fundamental class of algebraic varieties, serving as a bridge between algebraic geometry and combinatorics as well as convex geometry. They admit explicit descriptions in terms of rational fans. This combinatorial description allows many geometric and algebraic features of toric varieties to be studied through polyhedral data. 
Consequently, toric varieties have become a natural testing ground for numerous questions in algebraic geometry and a key intersection of algebraic geometry, combinatorics, convex geometry, symplectic geometry, and topology; see \cite{Danilov1978, Oda1988, CLS2011}. 
The close relationship between toric varieties and combinatorics has further inspired topological analogues. In their seminal work \cite{DavisJanus1991}, Davis and Januszkiewicz introduced quasitoric manifolds and small covers, which are now central objects in toric topology; see also \cite{BP_torictopo}. These manifolds provide a particularly natural framework for rigidity questions: to what extent can algebraic, topological, or combinatorial invariants recover the underlying geometric or equivariant structure?

Among the various rigidity problems arising in toric topology, cohomological rigidity has received especially extensive attention. 
Masuda \cite{Masuda2008} proved that two toric manifolds are isomorphic as varieties if and only if their equivariant cohomology algebras are weakly isomorphic. The same holds in the equivariant setting for quasitoric manifolds and small covers: two such manifolds are equivariantly homeomorphic if and only if their equivariant cohomology algebras are isomorphic.
It is therefore reasonable to ask whether the ordinary cohomology ring also determines the underlying topological type. This leads to the cohomological rigidity problem: whether two quasitoric manifolds, or small covers with isomorphic cohomology rings are necessarily homeomorphic or diffeomorphic. 
Partial positive results have been obtained for various classes of generalized Bott manifolds \cite{CMS2010, CS2011, Ishida2012}, quasitoric manifolds \cite{CMS2010_quasi, CPS2010}, and real Bott manifolds \cite{KamishimaMasuda2009}, among others. These results have established cohomological rigidity as one of the central themes in toric topology.

While the preceding results study rigidity from a cohomological perspective, we approach it here from a different viewpoint, namely equivariant bordism. 
More precisely, we ask the following question:
\begin{center}
	\emph{when does the equivariant bordism class determine the isomorphism type?}
\end{center}
Here, by isomorphism we mean isomorphism as algebraic varieties in the toric setting, and equivariant homeomorphism in the topological setting. 
We refer to this question as the \emph{equivariant bordism rigidity property}.
In general, one cannot expect such a rigidity phenomenon, since equivariant bordism is a much weaker equivalence relation than isomorphism. 
Nevertheless, the situation may be quite different for the objects in toric topology. Toric manifolds, quasitoric manifolds, and small covers admit explicit combinatorial descriptions in terms of fans, polytopes, and characteristic data, and their equivariant topology is therefore subject to strong combinatorial constraints. This suggests that, within these special categories, equivariant bordism may retain substantially more geometric information than it does for general equivariant manifolds. 
An important foundation for establishing such a connection is provided by classical fixed-point results in equivariant bordism. Hamrick and Ossa \cite{HamrickOssa} showed that the equivariant unitary bordism class of a $T^k$-manifold is completely determined by the equivariant normal data of its fixed point set. In particular, when the fixed points are isolated, the signed tangent representations at the fixed points determine the equivariant unitary bordism class. 
For the real case, Stong \cite{Stong1970} proved that the tangent representations at the fixed points completely determine the equivariant unoriented bordism class of smooth $\mathbb Z_2^k$-manifolds with finitely many fixed points.
These results provide the bridge between equivariant bordism and the combinatorial data of toric manifolds, quasitoric manifolds and small covers, and form the starting point for the bordism rigidity results established in this paper.

By describing the equality of the equivariant unitary bordism classes and the existence of isomorphisms of toric objects at the combinatorial level, we establish the following bordism rigidity results for three main classes of toric objects.
Our first main result concerns toric manifolds.

\begin{theorem} \label{main_theorem_toric}
	Two toric manifolds of complex dimension $n$ are isomorphic as varieties if and only if they are weakly $T^n$-equivariantly unitary bordant (i.e. equivariantly unitary bordant after twisting one of the $T^n$-actions by an automorphism of $T^n$).
\end{theorem}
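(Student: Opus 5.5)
The forward direction is the easy one. Suppose $X_\Sigma \cong X_{\Sigma'}$ as varieties. By the standard structure theory of toric varieties (Demazure/Cox), an isomorphism of complete smooth toric varieties can be taken to be equivariant up to an automorphism of $T^n$. Concretely, the two fans are related by some $A\in GL_n(\mathbb Z)$ with $A\Sigma=\Sigma'$. The resulting map is then a $T^n$-equivariant biholomorphism once one action is twisted by the automorphism of $T^n$ induced by $A$. An equivariant biholomorphism preserves the $T^n$-invariant complex (hence stably complex) structure, so the two manifolds are weakly equivariantly unitary bordant.

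For the converse, I will work entirely with fixed-point data via Hamrick--Ossa. After twisting by an automorphism $A$ of $T^n$, we may assume $X=X_\Sigma$ and $X'=X_{\Sigma'}$ are $T^n$-equivariantly unitary bordant. Since the fixed points are isolated, the Hamrick--Ossa theorem says the signed tangent representations coincide as formal sums:
\[
\sum_{\sigma\in\Sigma(n)} [\mathbb C_{\alpha_{\sigma,1}}\oplus\cdots\oplus\mathbb C_{\alpha_{\sigma,n}}] = \sum_{\sigma'\in\Sigma'(n)} [\mathbb C_{\alpha'_{\sigma',1}}\oplus\cdots\oplus\mathbb C_{\alpha'_{\sigma',n}}].
\]
All signs are $+1$ because the structures are genuinely complex. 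Thus the multisets of tangent representations agree. Here the weights $\{\alpha_{\sigma,i}\}$ at the fixed point of a maximal cone $\sigma$ form the basis of $\mathrm{Hom}(T^n,S^1)\cong\mathbb Z^n$ dual to the primitive ray generators of $\sigma$. Because $\sigma$ is smooth, this dual basis determines $\sigma$: its rays are the vectors of the dual basis of the weights, with no sign ambiguity thanks to the complex structure. Hence the multiset of tangent representations determines the set of maximal cones $\Sigma(n)$, and therefore the whole fan, since every cone is a face of a maximal one in a complete fan. So $\Sigma=\Sigma'$ after twisting by $A$, that is, $\Sigma' = A\Sigma$, which gives an isomorphism of varieties.

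I expect the main obstacle to be the precise bookkeeping in the converse. One must check that the Hamrick--Ossa injectivity is stated in a form in which cancellation cannot occur. Distinct fixed points with identical unsigned representations but opposite signs could in principle cancel, and ruling this out is exactly where positivity of the complex structure is used. One must also check that the formal sum recovers the multiset of cones with the correct orientation conventions. A further point is that each $n$-cone of $\Sigma$ produces a distinct representation: distinct maximal cones have distinct ray sets, so distinct dual bases. The identification is therefore a bijection, not merely an equality of multisets.
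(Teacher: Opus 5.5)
Your proposal is correct and follows essentially the same route as the paper. The forward direction goes through the $\operatorname{GL}(n,\mathbb Z)$-equivalence of fans (Lemma~\ref{isom_variety_fan}), and the converse compares the fixed-point sums $\varphi^U$ with all coefficients $+1$, so that the maximal cones of $\Sigma_1$ and of the twisted fan $A^{-1}\Sigma_2$ correspond bijectively via dual bases, and hence the fans agree. Two small points: the step ``bordant $\Rightarrow$ equal signed fixed-point data'' is the well-definedness of $\varphi^U$ rather than the Hamrick--Ossa injectivity, and you should state explicitly, as the paper does, that the twisted action has weights $A^\top w_i(\sigma)$, so that it is the toric structure of the fan $A^{-1}\Sigma_2$.
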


The same bordism rigidity phenomenon also occurs for quasitoric manifolds and small covers.
We call a matrix satisfies the \emph{injectivity condition} if its columns are pairwise distinct. 

\begin{theorem} \label{main_theorem_quasitoric}
	Let $P_1, P_2\subset\mathbb R^n$ be $n$-dimensional oriented simple polytopes. Let $(P_1,\Lambda_1)$ and $(P_2,\Lambda_2)$ be combinatorial quasitoric pairs. Suppose that both $\Lambda_1$ and $\Lambda_2$ satisfy the injectivity condition. Then there exists a $T^n$-equivariant homeomorphism between $M(P_1, \Lambda_1)$ and $M(P_2, \Lambda_2)$ preserving omniorientations if and only if they are equivariantly unitary bordant.
\end{theorem}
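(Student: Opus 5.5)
The forward direction is immediate: an omniorientation-preserving $T^n$-equivariant homeomorphism between quasitoric manifolds can be promoted to an equivariant diffeomorphism respecting the stably complex structures induced by the omniorientations. This uses the standard fact that the equivariant smooth structure and the stably complex structure of $M(P,\Lambda)$ are determined by the combinatorial data $(P,\Lambda)$ up to equivariant isomorphism. Hence the two manifolds represent the same class in $\Omega^{U,T^n}_{2n}$.

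The substance is the converse, and for it I would use the result of Hamrick and Ossa. Since fixed points are isolated, the bordism class of $M(P,\Lambda)$ is determined by the multiset of signed tangent representations at the fixed points. I would argue in four steps.

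\textbf{Step 1 (fixed-point data).} Fixed points of $M(P,\Lambda)$ correspond to vertices $v=F_{i_1}\cap\cdots\cap F_{i_n}$ of $P$. The tangent representation at $v$ has weights $w_1(v),\dots,w_n(v)\in\mathrm{Hom}(T^n,S^1)\cong\mathbb Z^n$ forming the dual basis to the columns $\lambda_{i_1},\dots,\lambda_{i_n}$. The sign $\epsilon(v)$ compares the orientation from the omniorientation with the orientation of $P$, and equals $\operatorname{sign}\det(\lambda_{i_1},\dots,\lambda_{i_n})$ up to the global orientation convention.

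\textbf{Step 2 (recover the full data from the bordism class).} The class is encoded in the formal sum $\sum_v \epsilon(v)[w_1(v),\dots,w_n(v)]$ in the free abelian group on unordered $n$-tuples of weights. Cancellation can occur, so it must be ruled out. Each vertex determines an unordered basis $\{\lambda_{i_1},\dots,\lambda_{i_n}\}$ of $\mathbb Z^n$, and by the injectivity condition this set determines the facets $\{F_{i_1},\dots,F_{i_n}\}$. Since distinct vertices of a simple polytope have distinct facet sets, distinct vertices give distinct unordered weight tuples. Therefore no cancellation occurs, and the bordism class determines exactly the set of vertex bases together with their signs. I expect this to be the main obstacle. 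One must verify that the Hamrick--Ossa invariant really is injective on these configurations, meaning that the signed-representation sum is a complete invariant and that distinct signed unordered tuples are linearly independent in the target. One must also handle the sign subtlety: a tuple with sign $-1$ counts as a distinct element, not as a cancellation.

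\textbf{Step 3 (reconstruct the combinatorics).} Equality of bordism classes gives a bijection $\phi$ from the vertices of $P_1$ to those of $P_2$ preserving vertex column-sets and signs. The column-sets of $\Lambda_k$, one per facet, are recovered as the vectors appearing in some vertex set, and injectivity lets us identify them with the facets. This yields a bijection of facets $F^1_i\leftrightarrow F^2_{\sigma(i)}$ with $\lambda^1_i=\lambda^2_{\sigma(i)}$, and $\sigma$ maps the facet sets of vertices onto the facet sets of vertices. A simple polytope's face lattice is determined by which $n$-subsets of facets meet in vertices, since every face of a simple polytope is an intersection of facets and contains a vertex. So $\sigma$ induces a combinatorial equivalence $P_1\cong P_2$ carrying $\Lambda_1$ to $\Lambda_2$ exactly, not merely up to sign. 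The matching signs show that this equivalence respects the orientations, or at worst reverses them globally; a global reversal is excluded by comparing $\epsilon$ at one vertex.

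\textbf{Step 4 (conclude).} By the Davis--Januszkiewicz construction, a combinatorial equivalence of polytopes identifying the characteristic matrices columnwise induces a $T^n$-equivariant homeomorphism $M(P_1,\Lambda_1)\cong M(P_2,\Lambda_2)$. It carries each characteristic submanifold $M_{F_i}$ to $M_{F_{\sigma(i)}}$ with the same chosen circle orientation $\lambda_i$, and hence preserves omniorientations.
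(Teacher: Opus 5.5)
Your proposal is correct and follows essentially the paper's route. The steps match: injectivity of $\varphi^U$ (which the paper simply cites from Hamrick--Ossa, so your ``main obstacle'' needs no further work); no cancellation, by Lemma~\ref{inj_imply_dif_tangent}; a vertex bijection matching weights and signs, which yields a facet bijection with $\lambda^1_j=\lambda^2_{\theta(j)}$ and an incidence-preserving poset isomorphism; and realization by a face- and orientation-preserving map $P_1\to P_2$ via \cite[Proposition 7.3.11]{BP_torictopo} and \cite[Corollary 5.2]{Wiemeler2013}. The paper also gets the easy direction through this same combinatorial criterion rather than via smooth and stably complex structures.

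One small correction concerns the sign. It is $\sigma(v)=\det(\lambda_{j_1},\dots,\lambda_{j_n})\operatorname{sign}_{o_P}(\mathbf n_{j_1},\dots,\mathbf n_{j_n})$, whose orientation factor varies with the vertex. Since the columns match exactly, equality of signs gives agreement of these factors at every vertex directly, so no ``global reversal'' argument is needed.
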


\begin{theorem} \label{main_theorem_smallcover}
    Let $P_i\subset\mathbb R^n$ be an $n$-dimensional simple polytope, and $\lambda_i$ be a mod 2 characteristic function of $P_i$ for $i = 1, 2$. Suppose that both maps $\lambda_1$ and $\lambda_2$ are injective. Then $M(P_1, \lambda_1)$ and $M(P_2, \lambda_2)$ are $\mathbb{Z}_2^n$-equivariant homeomorphic if and only if they are equivariantly unoriented bordant.
\end{theorem}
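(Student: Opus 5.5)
The plan is to translate both sides of Theorem~\ref{main_theorem_smallcover} into combinatorial data at the vertices of the polytopes and compare them there. The direction ``homeomorphic $\Rightarrow$ bordant'' is immediate, because an equivariant homeomorphism of small covers can be smoothed. By the Davis--Januszkiewicz classification, it comes from a combinatorial equivalence $P_1\cong P_2$ that is compatible with $\lambda_1,\lambda_2$ up to the identity automorphism of $\mathbb Z_2^n$, and this yields an equivariant diffeomorphism. All of the work is in the converse.

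For the converse, I would first describe the fixed data. The fixed points of $M(P,\lambda)$ correspond to the vertices $v=F_{i_1}\cap\dots\cap F_{i_n}$ of $P$. The tangent representation at $v$ is the sum of $n$ real one-dimensional representations whose characters $\rho_{v,1},\dots,\rho_{v,n}\in\operatorname{Hom}(\mathbb Z_2^n,\mathbb Z_2)$ form the dual basis of $\lambda(F_{i_1}),\dots,\lambda(F_{i_n})$. By Stong's theorem, $M_1$ and $M_2$ are equivariantly unoriented bordant if and only if the multisets (taken mod $2$) of tangent representations at fixed points agree. Since distinct vertices of one small cover already give distinct faithful representations (different facet sets span with distinct $\lambda$-values, using injectivity and the local basis property), these multisets are honest sets with no cancellation. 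So bordism gives a bijection $\phi$ between the vertex sets of $P_1$ and $P_2$ satisfying $\{\lambda_1(F)\colon F\ni v\}=\{\lambda_2(F')\colon F'\ni\phi(v)\}$ as sets of vectors, because the dual basis determines the basis.

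Next, I would reconstruct the face structure. Injectivity of $\lambda_i$ lets me identify each facet $F$ of $P_i$ with the vector $\lambda_i(F)$. A facet of $P_i$ is then recovered as the set of vertices whose label set contains a given vector $a$, and $a$ occurs as a label of $P_i$ exactly when this set is nonempty. Hence $\phi$ carries the vertex set of facet $F$ of $P_1$ onto the vertex set of the facet $F'$ of $P_2$ with $\lambda_2(F')=\lambda_1(F)$, and this defines a bijection of facets. Simple polytopes are determined up to combinatorial equivalence by their vertex--facet incidence (the face lattice consists of intersections of facets), so $P_1\cong P_2$ via a combinatorial equivalence $\psi$ with $\lambda_2\circ\psi=\lambda_1$. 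The classification then gives an equivariant homeomorphism, in fact with trivial automorphism of $\mathbb Z_2^n$.

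The main obstacle is to justify that the Stong data really forms sets rather than multisets that cancel mod $2$. Two vertices $v\neq w$ of $P_1$ could a priori carry the same label set, which would make their tangent representations equal and let them cancel in the bordism class. Injectivity rules this out: the facets at $v$ and at $w$ differ, so their images under the injective $\lambda_1$ differ as sets. I would check this carefully and also verify the dual-basis description of the characters. I would also note that Stong's criterion is stated for the sum in the representation ring mod $2$, so once there are no repetitions, equality of the classes becomes equality of sets.
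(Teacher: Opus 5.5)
Your proposal is correct and follows the same route as the paper, which proves this theorem as the real analogue of Theorem~\ref{combinatorial_equiv_bordism}: Stong's injectivity of the fixed-point map, injectivity of $\lambda_i$ to rule out cancellation mod $2$, recovery of the vertex--facet incidence (hence a face-poset isomorphism $h$ with $\lambda_1=\lambda_2\circ h$), and the Davis--Januszkiewicz classification, with no fixed-point signs to track. The only step you leave implicit is realizing the combinatorial equivalence by a face-preserving homeomorphism $P_1\to P_2$, which is standard; the paper cites Wiemeler for the corresponding step in the quasitoric case.
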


\begin{remark}
    Every omnioriented quasitoric manifold admits a $BT^\infty$-structure on its stable tangent bundle \cite{DavisJanus1991}.
    In \cite{Wiemeler2012}, Wiemeler proved that two $BT^\infty$-bordant omnioriented quasitoric manifolds are weakly equivariantly homeomorphic.

    Our main theorem \ref{main_theorem_quasitoric} differs from Wiemeler's result in several respects. 
    First, we work with equivariant unitary bordism, in which the torus action is retained throughout the bordism, whereas the $BT^\infty$-bordism considered by Wiemeler records the tangential $BT^\infty$-structure induced by the stable splitting and does not require the bordism itself to carry a torus action. 
    Second, under the injectivity condition, we show that equivariant unitary bordism is equivalent to the existence of an omniorientation-preserving $T^n$-equivariant homeomorphism. In particular, the conclusion is an actual $T^n$-equivariant homeomorphism, rather than merely a weakly equivariant one. Furthermore, Corollary \ref{cor_weak} extends this characterization to the weakly equivariant setting, establishing an equivalence between weakly equivariant unitary bordism and omniorientation-preserving weakly $T^n$-equivariant homeomorphism.
\end{remark}

For toric manifolds, the injectivity condition holds automatically, because distinct rays have distinct primitive generators. This condition is precisely what makes it possible for the equivariant bordism class to determine the isomorphism type.
Under the injectivity condition, distinct vertices have distinct tangent monomials. 
Thus equality of equivariant bordism classes induces a bijection between the fixed points preserving their tangent representations and fixed-point signs. 
The injectivity condition ensures that the combinatorial structure can be reconstructed from the fixed point data.
So the essential mechanism in our rigidity theorem is a local-to-global reconstruction: the injectivity condition turns local fixed-point representation data into global combinatorial data. 

The condition is automatic in several situations. In addition to toric manifolds, it also holds for the dual polytope of a simplicial 2-neighborly polytope (Proposition \ref{prop_2neighborly}).
We provide an example of equivariantly unitary bordant omnioriented quasitoric manifolds which, despite having distinct tangent monomials at distinct vertices, admit no omniorientation-preserving equivariant homeomorphism in the absence of the injectivity condition; see Example \ref{counterexample}.

A further consequence of our bordism rigidity theorems is that the known complete invariants of equivariant bordism become those of considerably finer geometric equivalence relations. 
Indeed, two unitary $T^k$-manifolds are equivariantly unitary bordant if and only if all integral equivariant cohomology Chern numbers agree \cite{LuWang2018}. Combining with our rigidity theorem, we obtain the following conclusions (see Section \ref{subsection_chara}):
\begin{enumerate}
	\item \emph{The integral equivariant cohomology Chern numbers, up to the natural $\operatorname{GL}(n, \mathbb{Z})$-action on $H^*(BT^n; \mathbb{Z})$, form a complete invariant of the isomorphism type of toric manifolds.} 
	\item \emph{Among omnioriented quasitoric manifolds satisfying the injectivity condition, the integral equivariant cohomology Chern numbers completely determine omniorientation-preserving equivariant homeomorphism.}
\end{enumerate}
Similarly, tom Dieck's characterization by equivariant Stiefel--Whitney numbers \cite{tomDieck1971} and the real rigidity theorem show that the equivariant Stiefel--Whitney numbers form a complete system of invariants for the equivariant homeomorphism types of small covers with injective characteristic functions. 
Thus characteristic invariants which, in general, distinguish only equivariant bordism classes become complete geometric invariants within the rigid families considered here.

The paper is organized as follows. 
In Section~\ref{section_preliminary}, we review the necessary background on equivariant unitary bordism, toric varieties, quasitoric manifolds, and related topics. Section~\ref{section_main_thm} is mainly devoted to the proofs of the three rigidity theorems.
We discuss applications of these rigidity theorems to dual of 2-neighborly polytopes, to complete invariants and to the classification of Hirzebruch surfaces in Section~\ref{section_application}.

\section{Preliminaries} \label{section_preliminary}
In this section, we recall several basic notions, including equivariant unitary bordism, fans, toric varieties, and quasitoric manifolds, with particular emphasis on the combinatorial data associated with toric varieties and quasitoric manifolds and how these data determine the corresponding geometric objects.

\subsection{Equivariant unitary bordism}
Throughout this paper, $G$ is assumed to be a compact Lie group.

\begin{definition}
	A \emph{$G$-equivariant unitary manifold} is a smooth compact manifold $M$ equipped with a smooth $G$-action and an isomorphism of real $G$-vector bundles
	\begin{align*}
		TM\oplus \underline{\mathbb{R}}^{n}\longrightarrow \xi,
	\end{align*}
	where $\xi$ is a complex $G$-vector bundle and $\underline{\mathbb{R}}^n$ represents a real trivial bundle over $M$ with trivial $G$-action.
\end{definition}

Let $M$ be a $G$-equivariant unitary manifold, and $p\in M$ be an isolated fixed point. 
Then $T_pM$ is a complex $G$-representation, which induces an orientation of $T_pM$. On the other hand, the unitary structure on $M$ determines an orientation of $M$, and thus an orientation of $T_pM$. Therefore, $T_pM$ is equipped with two orientations.
The \emph{sign} of the isolated fixed point $p$, denoted by $\sigma(p)$, is defined by
\begin{align} \label{def_sign}
	\sigma(p) = 
	\begin{cases*}
		+1, & \text{if these two orientations agree},\\
		-1, & \text{if these two orientations disagree}.
	\end{cases*}
\end{align}

We now specialize to the case $G = T^k = (S^1)^k$. Let $M^{2n}$ be a $T^k$-equivariant unitary manifold with a finite set of fixed points.
Then the tangent representation at each fixed point $p\in M^{T^k}$ can be decomposed as
\begin{align} \label{decompo_tangent_rep}
	T_pM \cong \tau_1(p)\oplus \dots \oplus \tau_n(p),
\end{align}
where $\tau_1(p), \dots, \tau_n(p)$ are nontrivial irreducible complex $T^k$-representations. 
Toric manifolds and quasitoric manifolds, which will be reviewed later, form important classes of $T^k$-equivariant unitary manifolds with finitely many fixed points.

For a unitary $T^k$-manifold, its equivariant unitary bordism class is determined by the connected components of the fixed point set, together with their equivariant normal bundles; see \cite[Theorem 1]{HamrickOssa} and \cite[Theorem 1]{Hanke2005}. When the fixed points are isolated, this description simplifies, since the equivariant normal bundle at an isolated fixed point is a complex $T^k$-representation \cite[Proposition 3.5]{Darby2015}.

Denote by $\mathcal{Z}_*^{U, T^k}$ the equivariant unitary bordism ring consisting of the $T^k$-equivariant unitary bordism classes that admit representatives with isolated $T^k$-fixed points, and by $J_k$ the set of isomorphism classes of nontrivial irreducible complex $T^k$-representations. 
The fixed-point data described above induce a homomorphism of graded rings
\begin{equation} \label{fixed_point_homo}
	\begin{split}
		\varphi^U: \mathcal{Z}_*^{U, T^k} &\longrightarrow \mathbb{Z}[J_k] \\
		[M^{2n}] & \longmapsto \sum_{p\in M^{T^k}} \sigma(p)\prod_{i = 1}^n \tau_i(p).
	\end{split}
\end{equation}

\begin{lemma}
	The homomorphism $\varphi^U$ is injective.
\end{lemma}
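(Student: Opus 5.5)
Since $\varphi^U$ is a homomorphism of graded rings, it suffices to show that its kernel is trivial. So I will take a class $[M^{2n}]\in\mathcal{Z}_*^{U,T^k}$ represented by a unitary $T^k$-manifold $M$ with isolated fixed points, assume $\varphi^U([M])=0$ in $\mathbb{Z}[J_k]$, and show that $[M]=0$. The idea is to reduce everything to the fixed-point theorem of Hamrick--Ossa \cite[Theorem 1]{HamrickOssa} (see also \cite[Theorem 1]{Hanke2005}). That theorem says the equivariant unitary bordism class of a unitary $T^k$-manifold is determined by the bordism class of its fixed-point data, i.e.\ by the class of the equivariant normal bundle of $M^{T^k}$ in the appropriate bordism of $T^k$-bundles over fixed-point-free stable data. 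In particular, the restriction-to-fixed-points map from $\Omega^{U,T^k}_*$ to fixed-point bordism is injective.

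The first step is to identify the fixed-point bordism in the isolated case. Each fixed point $p$ contributes a point carrying the complex representation $T_pM=\tau_1(p)\oplus\cdots\oplus\tau_n(p)$, with orientation sign $\sigma(p)$ relative to the unitary structure, by \cite[Proposition 3.5]{Darby2015}. So the fixed data of $M$ is an element of the zero-dimensional part
\[
\bigoplus_{V}\Omega^U_0=\bigoplus_V\mathbb{Z},
\]
where the sum runs over isomorphism classes $V$ of $n$-dimensional complex $T^k$-representations with no trivial summand. A point counts with multiplicity $\sigma(p)$, because a negatively oriented point is the negative of the positively oriented point in $\Omega_0^U$.

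The second step is to note that such a $V$ corresponds exactly to an unordered multiset $\{\tau_1,\dots,\tau_n\}\subset J_k$, by complete reducibility and uniqueness of the isotypic decomposition. Hence it corresponds to the monomial $\prod\tau_i$ in the polynomial ring $\mathbb{Z}[J_k]$. This identifies $\bigoplus_V\mathbb{Z}$ injectively with the degree-$n$ part of $\mathbb{Z}[J_k]$, and under this identification the fixed data of $M$ maps to $\varphi^U([M])$. So $\varphi^U([M])=0$ forces the signed count of fixed points with each isomorphism type of tangent representation to vanish, i.e.\ the fixed-point data is null-bordant. Hamrick--Ossa then gives $[M]=0$ in $\Omega^{U,T^k}_*$, and hence in $\mathcal{Z}_*^{U,T^k}$.

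The main obstacle is the first step. I must check that the sign $\sigma(p)$ defined in \eqref{def_sign} is exactly the orientation class of the fixed point in the Hamrick--Ossa fixed-point bordism group. One has to compare the stable complex structure on the normal bundle with the one coming from the representation, since the unitary structure on $M$ restricts at $p$ to a stable complex structure on $T_pM$ that may differ from the representation's complex structure. I would handle this following \cite[Proposition 3.5]{Darby2015}: the two complex structures differ only by orientation in real dimension $0$ data, so they yield $\pm$ the same generator.

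A secondary point is that a bordism in $\mathcal{Z}_*^{U,T^k}$ should mean bordism in $\Omega^{U,T^k}_*$. If instead it means bordism through manifolds with isolated fixed points, a separate argument would be needed, but I would adopt the former definition.
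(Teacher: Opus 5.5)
Your proposal is correct and follows the paper's route. The paper gives no separate argument: it deduces the lemma directly from the Hamrick--Ossa/Hanke theorem that fixed-point data determines the equivariant unitary bordism class, combined with Darby's identification of the normal data at an isolated fixed point as a signed complex representation, exactly as you do. The sign issue you flag resolves as you expect, since the complex structure on $T_pM$ is itself the nontrivial isotypic part of the fibre $\xi_p$ of the unitary structure, and the trivial part of $\xi_p$ gives the class $\sigma(p)\in\Omega^U_0\cong\mathbb{Z}$.
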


Thus the equivariant unitary bordism class can be detected entirely from the local fixed-point data.

\subsection{Toric varieties}
We use the standard terminology and notation for polytopes, cones, fans, and normal fans; see, for example, \cite[Chapter 2]{BP_torictopo} for details. In particular, we will use the notions of rational, regular, and complete fans.

We now turn to toric varieties and their correspondence with rational fans.
The multiplicative group $(\mathbb{C}^\times)^n = (\mathbb{C}\setminus \{0\})^n$ contains the compact torus $T^n$ as a Lie subgroup.

\begin{definition}
	A \emph{toric variety} is a normal complex algebraic variety $X$ containing an algebraic torus $(\mathbb{C}^\times)^n$ as a Zariski open subset in such a way that the natural action of $(\mathbb{C}^\times)^n$ on itself extends to an algebraic action on $X$. A \emph{toric manifold} is a nonsingular complete (i.e., compact in the usual topology) toric variety.
\end{definition}

A fundamental result in toric geometry establishes a correspondence between $n$-dimensional toric varieties and rational fans in $n$-dimensional space. Specifically, nonsingular complete toric varieties correspond to complete regular fans. Thus, a toric manifold $X_\Sigma$ is determined by a complete regular fan $\Sigma$. 

\begin{lemma}[{\cite{Oda1988, MasudaSuh2008}}] \label{isom_variety_fan}
	For toric manifolds $X_{\Sigma_1}$ and $X_{\Sigma_2}$ of complex dimension $n$, the following are equivalent:
	\begin{enumerate}
		\item $X_{\Sigma_1}$ and $X_{\Sigma_2}$ are non-equivariantly isomorphic as varieties.
		\item $X_{\Sigma_1}$ and $X_{\Sigma_2}$ are weakly equivariantly isomorphic as varieties.
		\item There is $A\in \operatorname{GL}(n, \mathbb{Z})$ which maps cones of $\Sigma_1$ bijectively onto those of $\Sigma_2$.
	\end{enumerate}
\end{lemma}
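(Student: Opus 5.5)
The plan is to prove the cycle of implications $(3)\Rightarrow(2)\Rightarrow(1)\Rightarrow(3)$. The step $(1)\Rightarrow(3)$ is best routed back through $(2)$. Throughout, identify $N=\operatorname{Hom}(\mathbb{C}^\times,(\mathbb{C}^\times)^n)\cong\mathbb{Z}^n$ with the lattice in which the fans live. Recall that an automorphism $\rho$ of $(\mathbb{C}^\times)^n$ as an algebraic group is the same thing as an element $A\in\operatorname{GL}(n,\mathbb{Z})$, acting on $N$ by $\lambda\mapsto\rho\circ\lambda$.

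\emph{$(3)\Rightarrow(2)$.} Given $A$ carrying the cones of $\Sigma_1$ bijectively onto those of $\Sigma_2$, use the standard functoriality of the fan construction. The lattice map $A$ is compatible with the fans, so it induces a toric morphism $f_A\colon X_{\Sigma_1}\to X_{\Sigma_2}$ that restricts to $\rho_A$ on the dense tori. Concretely, $f_A$ is glued from the ring maps $\mathbb{C}[\sigma_2^\vee\cap M]\to\mathbb{C}[\sigma_1^\vee\cap M]$ induced by $A^{T}$ on affine charts, where $\sigma_2=A\sigma_1$. Applying the same construction to $A^{-1}$ gives an inverse. Hence $f_A$ is an isomorphism of varieties satisfying $f_A(t\cdot x)=\rho_A(t)\cdot f_A(x)$, which is exactly a weakly equivariant isomorphism.

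\emph{$(2)\Rightarrow(1)$} is trivial.

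\emph{$(2)\Rightarrow(3)$.} Let $f$ be an isomorphism with $f(t x)=\rho(t)f(x)$, and let $A$ correspond to $\rho$. Since $f$ maps the open orbit onto the open orbit, we may compose with a torus translation and assume $f(x_0)=x_0$, where $x_0$ is the identity point of the torus. For $v\in N$, the limit $\lim_{t\to 0}\lambda_v(t)x_0$ exists in $X_{\Sigma_i}$ if and only if $v\in|\Sigma_i|$. Moreover, the limit lies in the orbit $O(\sigma)$ exactly when $v$ lies in the relative interior of $\sigma$. Equivariance gives $f(\lambda_v(t)x_0)=\lambda_{Av}(t)x_0$. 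Because $f$ is a homeomorphism preserving orbits, $v$ and $v'$ lie in the relative interior of the same cone of $\Sigma_1$ if and only if $Av$ and $Av'$ lie in the relative interior of the same cone of $\Sigma_2$. It follows that $A$ maps each cone of $\Sigma_1$ onto a cone of $\Sigma_2$, and this assignment is bijective.

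\emph{$(1)\Rightarrow(2)$.} This is the main obstacle. Let $f\colon X_{\Sigma_1}\to X_{\Sigma_2}$ be an isomorphism of varieties, and let $T_i\subset\operatorname{Aut}(X_{\Sigma_i})$ denote the image of the acting torus. I would invoke Demazure's theorem: for a complete (nonsingular) toric variety, the identity component $\operatorname{Aut}^0(X_{\Sigma_2})$ is a linear algebraic group containing $T_2$ as a maximal torus. Then $fT_1f^{-1}$ is an $n$-dimensional algebraic torus in $\operatorname{Aut}^0(X_{\Sigma_2})$. It is maximal, because a torus acting effectively on an $n$-dimensional irreducible variety has dimension at most $n$. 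By conjugacy of maximal tori in a connected linear algebraic group, there is $g\in\operatorname{Aut}^0(X_{\Sigma_2})$ with $g f T_1 f^{-1}g^{-1}=T_2$. Then $h=g\circ f$ satisfies $h\,t\,h^{-1}=\rho(t)$ for an isomorphism of algebraic groups $\rho\colon T_1\to T_2$. This makes $h$ a weakly equivariant isomorphism, which completes the cycle.

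The delicate point is justifying that $\operatorname{Aut}^0$ is a linear algebraic group in which $T_2$ is maximal. For this I would cite Demazure (see \cite{Oda1988}) rather than reprove it.
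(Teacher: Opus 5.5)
Your proof is correct. The paper gives no proof of its own and only cites Oda and Masuda--Suh, and your argument is the standard one behind those references: fan functoriality plus limits of one-parameter subgroups for (2)$\Leftrightarrow$(3), and Demazure's theorem (the acting torus is a maximal torus of the linear algebraic group $\operatorname{Aut}^0(X_\Sigma)$) combined with conjugacy of maximal tori for (1)$\Rightarrow$(2).
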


This result allows us to determine whether two toric manifolds are isomorphic by comparing the combinatorial data of their associated fans. More precisely, the isomorphism problem can be reduced to checking whether their cones are related by an element of $\operatorname{GL}(n,\mathbb Z)$.
This serves as a first step in our study of the relationship between isomorphism and equivariant unitary bordism of toric manifolds.

For a toric manifold $X_\Sigma$, the $T^n$-equivariant unitary structure arises from its complex structure. Regarding $X_\Sigma$ as a $T^n$-equivariant unitary manifold, there is a one-to-one correspondence between its fixed points and $\Sigma(n)$, the set of maximal cones in $\Sigma$.
By definition, every fixed point has sign $+1$.
Moreover, the tangent representation at each fixed point can be computed directly from the corresponding maximal cone.

Let $\Sigma$ be a complete regular fan in $\mathbb{R}^n$. Set $\Sigma(k) = \{\sigma\in \Sigma\mid \dim\sigma = k\}$ for $0\leq k\leq n$. Suppose that $\Sigma(1) = \{l_1, \dots, l_m\}$, and let $\lambda_j$ denote the primitive generator of the ray $l_j$. 
Denote by $\Lambda_\Sigma$ the $n\times m$-matrix $(\lambda_1, \dots, \lambda_m)$.
Thus if $\sigma\in \Sigma(n)$ has rays $l_{j_1}, \dots, l_{j_n}$, where $1\leq j_1<\dots < j_n\leq m$, then $\sigma$ is generated by $\lambda_{j_1}, \dots, \lambda_{j_n}$. Since $\Sigma$ is regular, $\Lambda_\sigma = (\lambda_{j_1}, \dots, \lambda_{j_n})\in \operatorname{GL}(n,\mathbb Z)$.

\begin{lemma}[\cite{CLS2011}]
    Suppose that $p\in (X_\Sigma)^{T^n}$ corresponds to a maximal cone $\sigma\in \Sigma(n)$. 
	Let 
	\begin{align*}
		W_\sigma = (w_1(\sigma), \dots, w_n(\sigma)) \in \operatorname{GL}(n,\mathbb Z)
	\end{align*}
	be the matrix determined by
	\begin{align*}
		W_\sigma^\top \Lambda_\sigma = I_n.
	\end{align*}
	Then the tangent representation at $p$ decomposes as
	\begin{align*}
		T_p X_\Sigma\cong \tau_{w_1(\sigma)}\oplus \dots \oplus \tau_{w_n(\sigma)},
	\end{align*}
	where for $\alpha = (a_1, \dots, a_n)^\top\in \mathbb{Z}^n$, $\tau_\alpha: T^n\to \operatorname{GL}(1, \mathbb{C})$ denotes the one-dimensional complex $T^n$-representation defined by $\tau_\alpha(t_1, \dots, t_n) = t_1^{a_1}\dots t_n^{a_n}$.
\end{lemma}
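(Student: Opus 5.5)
This lemma is a standard local computation, and I would prove it by reducing to the affine chart attached to $\sigma$. The fixed point $p$ lies in the open affine toric subvariety $U_\sigma = \operatorname{Spec}\mathbb{C}[\sigma^\vee\cap M]$, where $M=\mathbb{Z}^n$ is the character lattice dual to $N=\mathbb{Z}^n$, which contains the $\lambda_j$. Since $\Sigma$ is regular, $\Lambda_\sigma\in\operatorname{GL}(n,\mathbb{Z})$. The dual cone $\sigma^\vee$ is then generated by the dual basis of $\lambda_{j_1},\dots,\lambda_{j_n}$, namely the vectors $w_1(\sigma),\dots,w_n(\sigma)$ determined by $\langle w_i(\sigma),\lambda_{j_k}\rangle=\delta_{ik}$, which is exactly the condition $W_\sigma^\top\Lambda_\sigma=I_n$. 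It follows that $\sigma^\vee\cap M$ is the free monoid on the $w_i(\sigma)$. Hence $\mathbb{C}[\sigma^\vee\cap M]=\mathbb{C}[\chi^{w_1(\sigma)},\dots,\chi^{w_n(\sigma)}]$ is a polynomial ring, and $U_\sigma\cong\mathbb{C}^n$ with coordinates $x_i=\chi^{w_i(\sigma)}$.

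Next I would determine how the torus acts in these coordinates. The dense torus $(\mathbb{C}^\times)^n = N\otimes\mathbb{C}^\times$ acts on a character $\chi^{m}$ by $t\cdot\chi^m = \chi^m(t)\,\chi^m$. Up to the sign convention chosen for the action on functions versus points, the coordinate $x_i$ is therefore an eigenvector of weight $w_i(\sigma)$. On points of $\mathbb{C}^n$ the action is then $t\cdot(x_1,\dots,x_n)=(t^{w_1(\sigma)}x_1,\dots,t^{w_n(\sigma)}x_n)$, where $t^{\alpha}=t_1^{a_1}\cdots t_n^{a_n}$. The unique fixed point in $U_\sigma$ is the origin, which corresponds to $p$ under the bijection between fixed points and maximal cones. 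The action is linear and diagonal, so restricting to the compact torus $T^n$ gives $T_pX_\Sigma\cong\bigoplus_i \tau_{w_i(\sigma)}$.

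The only delicate point is the sign convention. Depending on whether the action on the coordinate ring is defined through $t$ or through $t^{-1}$, one could obtain the weights $-w_i(\sigma)$ instead. I would fix the convention in which the torus orbit map $t\mapsto t\cdot 1$ identifies $(\mathbb{C}^\times)^n\subset U_\sigma$ with the torus itself. Under that identification the coordinates restrict to the characters $\chi^{w_i}$, and points transform by $t^{w_i}$, which gives exactly the stated weights. Finally, I would note that the complex structure on $T_p X_\Sigma$ is the standard one on $\mathbb{C}^n$, so the decomposition is one of complex representations, consistent with sign $+1$.
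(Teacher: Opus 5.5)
Your proposal is correct. The paper gives no proof of this lemma and only cites \cite{CLS2011}, and your affine-chart computation is the standard argument behind that citation: for a regular maximal cone, $U_\sigma=\operatorname{Spec}\mathbb{C}[\sigma^\vee\cap M]\cong\mathbb{C}^n$ with coordinates $\chi^{w_i(\sigma)}$ given by the dual basis, the torus acts diagonally with weights $w_i(\sigma)$, and $p$ is the origin. You fix the sign convention correctly, with the torus acting on itself by multiplication so that the coordinates restrict to the characters $\chi^{w_i(\sigma)}$, and this yields the stated weights rather than their negatives.
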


Consequently, the image of $[X_\Sigma]$ under the fixed-point homomorphism $\varphi^U$ can be expressed explicitly in terms of the fan $\Sigma$ as,
\begin{align} \label{explicit_formula_toric}
	\varphi^U([X_\Sigma]) = \sum_{\sigma\in \Sigma(n)} \prod_{i=1}^n \tau_{w_i(\sigma)}.
\end{align}
Thus the $T^n$-equivariant unitary bordism class of a toric manifold is encoded by its fan $\Sigma$.
This provides another key ingredient in our study of the relationship between isomorphism and equivariant unitary bordism.

\subsection{Quasitoric manifolds}
There are several topological generalizations of toric varieties. In this section, we focus on quasitoric manifolds, introduced by Davis and Januszkiewicz~\cite{DavisJanus1991}. Omnioriented quasitoric manifolds admit natural $T^n$-equivariant unitary structures, and hence 
also provide a significant family of examples in $T^n$-equivariant unitary bordism. We refer to \cite[Chapter 7]{BP_torictopo} for further background on quasitoric manifolds.

Let $P$ be an $n$-dimensional simple polytope. A \emph{quasitoric manifold} over $P$ is a $2n$-dimensional closed smooth manifold $M$ with a locally standard $T^n$-action, whose orbit space is homeomorphic to the polytope $P$, as manifolds with corners. 
Every projective toric manifold $X_P$ is a quasitoric manifold over $P$.

Let $M$ be a quasitoric manifold over $P$, and let $\pi: M\to P$ be the orbit map. Denote by $\mathcal{F}(P) = \{F_1, \dots, F_m\}$ the set of facets of $P$. For each $1\le j\le m$, the preimage $M_j = \pi^{-1}(F_j)$ is a connected $T^n$-invariant submanifold of real codimension 2, called the \emph{characteristic submanifold} corresponding to $F_j$. 
Each $M_j$ is a connected component of the fixed point set of a circle subgroup $T_j\subset T^n$. The assignment
\begin{align} \label{charac_function}
	\lambda: \mathcal{F}(P)\longrightarrow \{\text{circle subgroup of } T^n\}, \quad F_j \longmapsto T_j,
\end{align}
is called the \emph{characteristic function} of $M$.
Equivalently, under the identification $\operatorname{Hom}(S^1, T^n)\cong \mathbb{Z}^n$, each $T_j$ is represented by a primitive vector $\lambda_j = (\lambda_{1j}, \dots, \lambda_{nj})^\top\in \mathbb{Z}^n$, uniquely up to sign, such that
\begin{align*}
	T_j = \{(z^{\lambda_{1j}}, \dots, z^{\lambda_{nj}})\in T^n\mid z\in S^1\}.
\end{align*}
The local standardness of the $T^n$-action implies the nonsingularity condition:
\begin{align} \label{nonsingular_condition}
	\text{if } F_{j_1}\cap \dots \cap F_{j_n}\ne \varnothing, \quad\det(\lambda_{j_1}, \dots, \lambda_{j_n}) = \pm 1.
\end{align}

The pair $(P, \lambda)$, called the \emph{characteristic pair}, records both the combinatorial structure of the orbit space and the isotropy data of the torus action. In particular, the characteristic pair determines
the quasitoric manifold up to equivariant homeomorphism.

Let $(P, \lambda)$ be a characteristic pair, and $p\in \partial P$. Denote by $F(p)$ the smallest face of $P$ containing $p$. Write $F(p) = F_{j_1}\cap \dots\cap F_{j_k}$. Define
\begin{align*}
	M(P, \lambda) = T^n\times P/\sim,
\end{align*}
where $(t, p)\sim (t', p)$ if and only if $t^{-1}t'\in T_{j_1}\times \dots \times T_{j_k}$. 
The left multiplication action of $T^n$ on itself descends to an action on $M(P, \lambda)$. Since $M(P, \lambda)$ is $T^n$-equivariantly homeomorphic to the quotient of the moment-angle manifold $\mathcal{Z}_P$ by a free $T^{m-n}$-action, $M(P, \lambda)$ has a canonical smooth structure.

\begin{lemma}[{\cite[Proposition 1.8]{DavisJanus1991}}]
	Let $M$ be a quasitoric manifold over $P$ with characteristic function $\lambda$. Then $M$ is $T^n$-equivariantly homeomorphic to the canonical model $M(P, \lambda)$, by a homeomorphism covering the identity map on $P$.
\end{lemma}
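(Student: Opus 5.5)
This is the classical Davis--Januszkiewicz comparison. The plan is to compare $M$ directly with $M(P,\lambda)$ by building a continuous map $f\colon T^n\times P\to M$ that is constant on the $\sim$-classes, and then to check that the induced map on the quotient is an equivariant homeomorphism. The key input is a continuous section $s\colon P\to M$ of the orbit map $\pi$. Granting such an $s$, we set $f(t,p)=t\cdot s(p)$. This map is $T^n$-equivariant and covers the identity of $P$.

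The first step is the existence of the section $s$. We would build it using local standardness. Each vertex $v$ of $P$ has a neighbourhood $U_v$ in $P$ over which $\pi^{-1}(U_v)$ is equivariantly homeomorphic, up to an automorphism of $T^n$, to an open subset of $\mathbb{C}^n$ with the standard action. The orbit space of that subset is identified with an open subset of $\mathbb{R}^n_{\ge0}$. Over such a chart there is the obvious local section $x\mapsto(\sqrt{x_1},\dots,\sqrt{x_n})$.

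To glue these local sections we use the fact that $P$ is contractible; more precisely, $\pi$ restricted to the free part over the interior $P^\circ$ is a principal $T^n$-bundle over a contractible base. Thus sections exist over $P^\circ$, and we have to extend them to the boundary compatibly. The standard way to do this is inductively over the faces of $P$, using collars of the faces. Alternatively one can cite the general fact, due to Bredon, that an action with orbit space $P$ and with principal bundles over faces trivial admits a section whenever $P$ is contractible and the stratification is locally conical. This gluing is the main obstacle, and I would handle it by the face-by-face induction: at each stage extend the section from a neighbourhood of lower-dimensional faces using the local models, whose section sets are contractible.

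The second step is to check that $f$ factors through $\sim$. For $p$ in the relative interior of $F(p)=F_{j_1}\cap\dots\cap F_{j_k}$, the stabilizer of $s(p)$ is exactly $T_{j_1}\times\dots\times T_{j_k}$. This holds because $\pi^{-1}(F_j)$ is a component of the fixed set of $T_j$, and locally the model $\mathbb{C}^n$ has isotropy given by the corresponding coordinate circles. Hence $t\,s(p)=t'\,s(p)$ if and only if $t^{-1}t'\in T_{j_1}\times\cdots\times T_{j_k}$, which is exactly the relation $\sim$. So $f$ descends to a map $\bar f\colon M(P,\lambda)\to M$ that is continuous, equivariant and injective.

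The third step is surjectivity. Every orbit $\pi^{-1}(p)$ contains $s(p)$, so $\bar f$ is onto. Since $M(P,\lambda)$ is compact and $M$ is Hausdorff, $\bar f$ is a homeomorphism, and it covers the identity on $P$ by construction.
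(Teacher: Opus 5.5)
Your outline (a continuous section $s$, the map $f(t,p)=t\cdot s(p)$, the isotropy computation showing that $f$ descends injectively, and the compact-to-Hausdorff argument) is the standard Davis--Januszkiewicz argument, which the paper only cites. Your second and third steps are fine.

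The gap is in the first step, the existence of the section. This is the only nontrivial part of the lemma, and the mechanism you propose does not work as stated.

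Starting from an arbitrary section over $P^\circ$ does not work. Take a chart $\mathbb{C}\times\mathbb{C}^\times$ near a facet, with orbit coordinates $x_1=|z|^2$, $x_2=|w|^2$. The map $(x_1,x_2)\mapsto(\sqrt{x_1},\sqrt{x_2}\,e^{i/x_1})$ is a section over the interior with no continuous extension to $x_1=0$.

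More seriously, the face-by-face induction rests on the claim that local section sets are contractible, and this is false. Over the relative interior of a $k$-face $G$ the fibres are the $k$-tori $T^n/T_G$. Extending a section from a collar of $\partial G$ over the remaining $k$-disc in $G$ is obstructed by the homotopy class of a map $S^{k-1}\to T^n/T_G$. For $k=2$ this class lies in $\pi_1(T^2)\cong\mathbb{Z}^2$, and it genuinely occurs already for $n=2$. Twist the section over a strip along one edge by a loop $\gamma$ in $T^2$ with $[\gamma]\neq 0$, supported away from the vertices. This gives equally valid data on a collar of $\partial P$, but it shifts the class of the inner boundary loop by $[\gamma]$. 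So for one of the two choices the section does not extend over the remaining disc in $P^\circ$. Your induction can therefore get stuck at the $2$-faces unless the choices along the edges are made coherently, and you give no argument that this is possible.

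What is missing is the global input that makes all these obstructions vanish, which is exactly where contractibility of $P$ enters. One clean way uses that the action is smooth:
\begin{enumerate}
\item The slice theorem makes the action smoothly locally standard, and the characteristic submanifolds $M_j$ meet transversally.
\item Form the real blow-up $\widehat{M}$ of $M$ along the $M_j$, replacing each $M_j$ by the circle bundle of its normal bundle. This is a manifold with corners on which $T^n$ acts freely, with $\widehat{M}/T^n\cong P$ and an equivariant blow-down map $\widehat{M}\to M$ over $P$.
\item Then $\widehat{M}\to P$ is a principal $T^n$-bundle over a contractible base, hence trivial. Composing a section of it with the blow-down map gives the required $s$.
\end{enumerate}
In obstruction-theoretic terms, the obstructions you meet are governed by a class in $H^2(P;\mathbb{Z}^n)=0$. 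Your appeal to Bredon, as formulated, is too imprecise to replace this step.
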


An \emph{omniorientation} of a quasitoric manifold $M$ is a choice of orientations on $M$ and on each characteristic submanifold $M_j$, $1\le j\le m$. 
A quasitoric manifold equipped with an omniorientation is called \emph{omnioriented}.
If $M$ is equipped with a $T^n$-invariant almost complex structure, then the complex orientation of $M$ and $M_j$ determine a canonical omniorientation. 
In particular, the complex structure of a toric manifold determines such an omniorientation.

An omniorientation of a quasitoric manifold canonically determines the signs of its characteristic vectors $\lambda_j$, as well as a $T^n$-equivariant unitary structure on the manifold.

Indeed, fixing an omniorientation, the stabilizer $T_j$ of $M_j$ is represented by a uniquely determined primitive vector $\lambda_j = (\lambda_{1j}, \dots, \lambda_{nj})^\top\in \mathbb{Z}^n$. The characteristic function \eqref{charac_function} can therefore be encoded by the integer $n \times m$-matrix
\begin{align*}
	\Lambda = (\lambda_1, \dots, \lambda_m) = (\lambda_{ij}),
\end{align*}
satisfying the nonsingularity condition \eqref{nonsingular_condition}. The matrix $\Lambda$ is called the \emph{characteristic matrix}.

We always equip $T^n$ with its canonical orientation. Then a choice of an orientation for $M$ is equivalent to a choice of an orientation for the polytope $P$.
The pair $(P, \Lambda)$, consisting of an oriented simple polytope $P^n$ with $m$ facets and an $n \times m$-characteristic matrix $\Lambda$, is called a \emph{combinatorial quasitoric pair}. We denote by $M(P, \Lambda)$ the corresponding omnioriented quasitoric manifold.

The omniorientation also determines a natural $T^n$-equivariant unitary structure on $M(P,\Lambda)$, as follows.

\begin{lemma}[{\cite[Theorem 6.6]{DavisJanus1991}}]
	There is an isomorphism of real $T^n$-vector bundles over $M(P, \Lambda)$,
	\begin{align*}
		TM(P, \Lambda) \oplus \underline{\mathbb{R}}^{2(m-n)} \cong \rho_1\oplus \dots \oplus \rho_m,
	\end{align*}
	where each $\rho_i$ is a $T^n$-equivariant complex line bundle over $M(P, \Lambda)$ and $\underline{\mathbb{R}}^{2(m-n)}$ denotes the trivial real $2(m-n)$-dimensional $T^n$-vector bundle over $M(P, \Lambda)$.
\end{lemma}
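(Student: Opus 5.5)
This is the Davis–Januszkiewicz stable splitting, and I would prove it by realizing $M(P,\Lambda)$ as a quotient of the moment-angle manifold. Concretely, write
\[
M(P,\Lambda)=\mathcal{Z}_P/K,
\qquad K=\ker\bigl(\Lambda\colon T^m\to T^n\bigr)\cong T^{m-n}.
\]
The nonsingularity condition \eqref{nonsingular_condition} guarantees that $K$ is connected and acts freely on $\mathcal{Z}_P$. This is the same quotient description already used above to give $M(P,\Lambda)$ its smooth structure.

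The first step is to embed $\mathcal{Z}_P\subset\mathbb{C}^m$ as a nondegenerate intersection of $m-n$ real quadrics. This embedding is $T^m$-invariant, and the equivariant normal bundle of $\mathcal{Z}_P$ in $\mathbb{C}^m$ is trivial of rank $m-n$ with trivial $T^m$-action. This gives a $T^m$-equivariant isomorphism
\[
T\mathcal{Z}_P\oplus\underline{\mathbb{R}}^{m-n}\cong \mathcal{Z}_P\times\mathbb{C}^m
=\bigoplus_{i=1}^m \mathcal{Z}_P\times\mathbb{C}_{(i)},
\]
where $\mathbb{C}_{(i)}$ carries the $i$-th coordinate character of $T^m$.

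The second step is to divide by the free $K$-action. Set $\rho_i=\mathcal{Z}_P\times_K\mathbb{C}_{(i)}$. Each $\rho_i$ is a complex line bundle over $M$, and it is $T^n=T^m/K$-equivariant because the $T^m$-action on $\mathcal{Z}_P\times\mathbb{C}_{(i)}$ descends. On tangent bundles the quotient gives
\[
T\mathcal{Z}_P/K\cong TM\oplus\mathfrak{k},
\]
where $\mathfrak{k}=\operatorname{Lie}(K)\cong\mathbb{R}^{m-n}$ is trivial because $K$ is abelian; this is the vertical tangent bundle of the principal $K$-bundle $\mathcal{Z}_P\to M$. The normal trivial summand $\underline{\mathbb{R}}^{m-n}$ also stays trivial after dividing by $K$. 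Combining these gives
\[
TM\oplus\underline{\mathbb{R}}^{2(m-n)}\cong\rho_1\oplus\dots\oplus\rho_m .
\]
The $T^n$-action on both trivial summands is trivial, since $T^m$ acts trivially on the normal directions and by the adjoint action, which is trivial, on $\mathfrak{k}$.

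The main obstacle is checking that every identification is $T^n$-equivariant and compatible with the omniorientation. The signs of the $\lambda_j$, and hence $K$ and the complex structures on the $\rho_i$, must match the chosen orientations of the $M_j$, with $\rho_j$ restricting to the normal bundle of $M_j$. I would handle this by fixing the sign convention: the orientation of $M_j$ together with that of $M$ orients its normal bundle, and hence fixes $\lambda_j$. Finally, I would verify that the normal bundle of the quadrics is equivariantly trivial by using the explicit defining map $\mathbb{C}^m\to\mathbb{R}^{m-n}$, $z\mapsto\bigl(\sum_i \gamma_{ki}|z_i|^2\bigr)_k$, which is $T^m$-invariant.
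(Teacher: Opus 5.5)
Your proposal is correct and follows the standard proof of this result. The steps are: $M(P,\Lambda)=\mathcal{Z}_P/K$ with $K=\ker\Lambda$ acting freely; the $T^m$-equivariantly trivial normal bundle of $\mathcal{Z}_P\subset\mathbb{C}^m$; and the splitting $T\mathcal{Z}_P/K\cong TM\oplus\underline{\mathfrak{k}}$. This is the argument of Davis--Januszkiewicz and Buchstaber--Panov, which the paper cites without reproducing.

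Two small remarks. The splitting of $0\to\underline{\mathfrak{k}}\to T\mathcal{Z}_P/K\to TM\to 0$ should be chosen $T^n$-invariantly, for example via a $T^m$-invariant metric. The omniorientation bookkeeping you single out as the main obstacle is not needed for the statement as written; it matters only later, when the $\rho_j$ are used to define the unitary structure.
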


So the omnioriented quasitoric manifold $M(P, \Lambda)$ can be naturally regarded as a $T^n$-equivariant unitary manifold. The sign \eqref{def_sign} of each fixed point and the decomposition of the tangent representation \eqref{decompo_tangent_rep} at each fixed point can be computed directly from the combinatorial quasitoric pair. Denote by $o_P$ the orientation of $P$.

\begin{lemma}[{\cite[Lemma 7.3.19]{BP_torictopo}}] \label{sign_omni_quasi}
	Let $p = M_{j_1}\cap \dots\cap M_{j_n}$ be a fixed point. Then
	\begin{align} \label{sign_omniori_quasi}
		\sigma(p) = \det(\lambda_{j_1}, \dots, \lambda_{j_n})\operatorname{sign}_{o_P}(\mathbf{n}_{j_1}, \dots, \mathbf{n}_{j_n}),
	\end{align}
	where $\mathbf{n}_{j_1}, \dots, \mathbf{n}_{j_n}$ are inward-pointing normal vectors to the facets $F_{j_1}, \dots, F_{j_n}$, respectively, and
	\begin{align*}
		\operatorname{sign}_{o_P}(\mathbf{n}_{j_1}, \dots, \mathbf{n}_{j_n}) = 
		\begin{cases*}
			+1, & \text{the orientations of} $(\mathbf{n}_{j_1}, \dots, \mathbf{n}_{j_n})$ \text{and} $P$ \text{coincide},\\
			-1, & \text{the orientations of} $(\mathbf{n}_{j_1}, \dots, \mathbf{n}_{j_n})$ \text{and} $P$ \text{differ}.
		\end{cases*}
	\end{align*}
	If $v = F_{j_1}\cap \dots\cap F_{j_n}$ is the corresponding vertex of $p$, write $\sigma(v)\coloneq \sigma(p)$.
\end{lemma}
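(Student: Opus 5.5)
The statement just above is Lemma~\ref{sign_omni_quasi}, the sign formula \eqref{sign_omniori_quasi} at a fixed point of $M(P,\Lambda)$. I would prove it by a local computation at the fixed point $p$, comparing the two orientations of $T_pM$ from \eqref{def_sign} directly.

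\emph{Step 1: local model.} Near the vertex $v=F_{j_1}\cap\dots\cap F_{j_n}$, identify a neighbourhood of $v$ in $P$ with $\mathbb{R}^n_{\ge}$. The coordinate $x_k$ measures the distance to $F_{j_k}$, so $\partial/\partial x_k$ points along the inward normal $\mathbf{n}_{j_k}$. Set $\Lambda_v=(\lambda_{j_1},\dots,\lambda_{j_n})\in\operatorname{GL}(n,\mathbb{Z})$ and use the automorphism $\Lambda_v^{-1}$ of $T^n$. The canonical model $T^n\times\mathbb{R}^n_{\ge}/\sim$ is then $T^n$-equivariantly identified with $\mathbb{C}^n$ via $(t,x)\mapsto(\sqrt{x_k}\,s_k)_k$, where $s=\Lambda_v^{-1}(t)$. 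In these coordinates $M_{j_k}$ is $\{z_k=0\}$, and the circle $T_{j_k}$ rotates the $k$-th coordinate.

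\emph{Step 2: the complex-representation orientation.} The representation orientation on $T_pM$ comes from the weights $\tau_{w_k}$, with $W^\top\Lambda_v=I$. Each summand $\mathbb{C}_k$ is the normal plane to $M_{j_k}$. The omniorientation orients $M$ and $M_{j_k}$, and hence orients this normal plane. By definition, the sign of $\lambda_{j_k}$ is chosen so that the circle $T_{j_k}$ rotates $\mathbb{C}_k$ positively for that normal orientation. So the representation orientation on each $\mathbb{C}_k$ coincides with the orientation induced from the omniorientation. The representation orientation of $T_pM$ is therefore the product orientation $\bigoplus_k\mathbb{C}_k$, with each factor carrying this normal orientation.

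\emph{Step 3: the unitary orientation.} This is the orientation of $M$, which comes from $o_P$ together with the standard orientation of $T^n$. I would compute it on the open dense part $T^n\times\operatorname{int}P$. With the convention (orientation of $P$) followed by (orientation of $T^n$), the frame $(\mathbf{n}_{j_1},\dots,\mathbf{n}_{j_n},e_1,\dots,e_n)$ represents $\operatorname{sign}_{o_P}(\mathbf{n}_{j_1},\dots,\mathbf{n}_{j_n})$ times the orientation of $M$.

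Rewriting $e_1,\dots,e_n$ in the basis $\lambda_{j_1},\dots,\lambda_{j_n}$ of the Lie algebra multiplies by $\det\Lambda_v$. The frame $(\mathbf{n}_{j_1},\dots,\mathbf{n}_{j_n},\lambda_{j_1},\dots,\lambda_{j_n})$ corresponds, in polar coordinates $(r_k,\theta_k)$ on $\mathbb{C}^n$, to $(\partial_{r_1},\dots,\partial_{r_n},\partial_{\theta_1},\dots,\partial_{\theta_n})$. Reordering this to $(\partial_{r_1},\partial_{\theta_1},\dots)$ introduces a universal sign $(-1)^{n(n-1)/2}$, which is absorbed by the orientation convention chosen for $M$ in \cite{BP_torictopo}. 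That fixed sign convention has to be checked once, for example on $\mathbb{C}P^1$ or $\mathbb{C}^n$ with the standard structure, where both sides equal $+1$.

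Comparing Steps 2 and 3 gives $\sigma(p)=\det\Lambda_v\cdot\operatorname{sign}_{o_P}(\mathbf{n}_{j_1},\dots,\mathbf{n}_{j_n})$.

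\emph{Main obstacle.} The hard part is bookkeeping of orientation conventions. One must make sure that the omniorientation convention fixing the sign of $\lambda_j$ is exactly the one that makes the normal orientation agree with the representation orientation. One must also make sure that the convention for orienting $M$ from $(P,T^n)$ cancels the reordering sign. I would fix both conventions so they agree in the standard example $M=\mathbb{C}P^n$ with $P$ the standard simplex and $\Lambda=(e_1,\dots,e_n,-\textstyle\sum e_i)$, and then the local argument above applies verbatim at every vertex.
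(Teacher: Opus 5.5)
The paper gives no proof of this lemma; it only quotes Buchstaber--Panov. Your strategy is the natural way to prove it: build the chart $\mathbb{C}^n$ through $\Lambda_v^{-1}$, compare orientations on the free part $T^n\times\operatorname{int}P$, pick up $\det\Lambda_v$ from $e\mapsto\lambda$ and $\operatorname{sign}_{o_P}$ from the normals. The gap is in the global sign, which is the entire content of the lemma. In Step 3 you explicitly orient $M$ by ``$o_P$ followed by $o_{T^n}$''. With that convention your own computation proves
\[
\sigma(p)=(-1)^{n(n-1)/2}\det\Lambda_v\,\operatorname{sign}_{o_P}(\mathbf n_{j_1},\dots,\mathbf n_{j_n}),
\]
which contradicts the statement whenever $n\equiv 2,3\pmod 4$. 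Concretely, take the model $\mathbb{C}^2$ over $\mathbb{R}^2_{\ge 0}$ with the standard $o_P$ and $\Lambda_v=I$. Your orientation of $M$ is then $[\partial_{r_1},\partial_{r_2},\partial_{\theta_1},\partial_{\theta_2}]$, which is the negative of the complex orientation. So you get $\sigma=-1$, while the formula gives $+1$. Saying the sign is ``absorbed by the convention in BP'' after you have fixed a different convention does not close this. Your proposed calibration on $\mathbb{C}P^1$ cannot detect the problem either, because $(-1)^{n(n-1)/2}=1$ for $n=1$.

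The repair is to state the convention the lemma presupposes. On $T^n\times\operatorname{int}P$, orient $M$ by the interleaved frame $(b_1,e_1,\dots,b_n,e_n)$, where $(b_k)$ is positive for $o_P$ and $(e_k)$ is the standard basis of the Lie algebra of $T^n$. Equivalently, this is $(-1)^{n(n-1)/2}$ times the product orientation. With this convention, a projective toric manifold over its moment polytope with the standard $o_P$ receives its complex orientation, and your Step 3 yields exactly $\sigma(p)=\det\Lambda_v\operatorname{sign}_{o_P}(\mathbf n_{j_1},\dots,\mathbf n_{j_n})$. Calibrating on $\mathbb{C}^n$ or $\mathbb{C}P^n$ in every dimension is precisely what singles this convention out.

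Two smaller corrections. First, $\partial/\partial x_k$ does not point along $\mathbf n_{j_k}$. It is the edge vector at $v$ leaving $F_{j_k}$, i.e.\ the dual basis: $\langle\mathbf n_{j_l},\partial_{x_k}\rangle=\delta_{lk}$. This is harmless for orientations, because the transition matrix from $(\mathbf n_{j_k})$ to $(\partial_{x_k})$ is the inverse Gram matrix of the normals and has positive determinant, but it should be stated. Second, in Step 2 the agreement of the representation orientation with the chart's complex orientation on each $z_k$-plane should be derived from the cited weights. In the representation complex structure $T^n$ acts on that plane with weight $w_k$, so $T_{j_k}$ acts with weight $\langle w_k,\lambda_{j_k}\rangle=1$. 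That is also its weight for the chart's standard complex structure, so the two orientations agree.
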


The formula \eqref{sign_omniori_quasi} is independent of the order of the facets, since a permutation changes the signs of the two determinants simultaneously.

\begin{lemma}[{\cite[Proposition 7.3.18]{BP_torictopo}}]
	Let $v = F_{j_1}\cap \dots\cap F_{j_n}$ with $1\le j_1 <\dots< j_n\le m$, and set
	\begin{align*}
		\Lambda_v = (\lambda_{j_1}, \dots, \lambda_{j_n}) \in \operatorname{GL}(n,\mathbb Z).
	\end{align*}
	Let 
	\begin{align*}
		W_v = (w_1(v), \dots, w_n(v)) \in \operatorname{GL}(n,\mathbb Z)
	\end{align*}
	be the matrix determined by
	\begin{align*}
		W_v^\top \Lambda_v = I_n.
	\end{align*}
	Then the tangent representation at $p = \pi^{-1}(v)$ decomposes as
	\begin{align*}
		T_p M(P, \Lambda)\cong \tau_{w_1(v)}\oplus \dots \oplus \tau_{w_n(v)}.
	\end{align*}
\end{lemma}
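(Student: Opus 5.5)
The plan is to compute $T_pM(P,\Lambda)$ directly from a local model of the canonical construction near the vertex $v$, and then to check that the complex structure on $T_p M(P,\Lambda)$ coming from the omniorientation is the expected one.

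\emph{Step 1 (local chart).} Since $P$ is simple, a neighbourhood $U_v$ of $v$ in $P$ is diffeomorphic, as a manifold with corners, to $\mathbb{R}^n_{\ge 0}$. Under this identification the facet $F_{j_k}$ corresponds to the coordinate hyperplane $\{x_k=0\}$, for $k=1,\dots,n$. Consider the homomorphism
\begin{align*}
\Phi_v\colon T^n_{\mathrm{std}}=(S^1)^n\longrightarrow T^n,\quad (z_1,\dots,z_n)\longmapsto \prod_{k=1}^n\bigl(z_k^{\lambda_{1j_k}},\dots,z_k^{\lambda_{nj_k}}\bigr).
\end{align*}
Its differential on $\mathbb{Z}^n$ is $\Lambda_v$. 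By the nonsingularity condition \eqref{nonsingular_condition}, $\Lambda_v\in\operatorname{GL}(n,\mathbb Z)$, so $\Phi_v$ is an isomorphism of tori. It carries the $k$-th coordinate circle onto $T_{j_k}$.

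\emph{Step 2 (identification of the weights).} The map $(t,x)\mapsto \bigl(\Phi_v^{-1}(t)_k\sqrt{x_k}\bigr)_k$ identifies $\pi^{-1}(U_v)=T^n\times U_v/\!\sim$ with $\mathbb{C}^n$. Under this map the relation $\sim$ (collapsing $T_{j_k}$ over $x_k=0$) matches exactly the collapsing of the $k$-th circle of $\mathbb{C}^n$, so it is a $T^n$-equivariant homeomorphism sending $p$ to $0$. Here $T^n$ acts on the $k$-th coordinate through the character $\chi_k=\mathrm{pr}_k\circ\Phi_v^{-1}$. The character $\chi_k$ corresponds to a vector $w\in\mathbb{Z}^n$ determined by
\begin{align*}
\langle w,\lambda_{j_l}\rangle=\chi_k\circ\Phi_v(e_l)=\delta_{kl}.
\end{align*}
These conditions say precisely that $w=w_k(v)$ with $W_v^\top\Lambda_v=I_n$. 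This chart is compatible with the canonical smooth structure, since both come from the standard chart of $\mathcal Z_P$ near $v$ divided by the free $T^{m-n}$-action. Hence, as real representations,
\begin{align*}
T_pM(P,\Lambda)\cong\tau_{w_1(v)}\oplus\dots\oplus\tau_{w_n(v)}.
\end{align*}

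\emph{Step 3 (complex structure and orientation).} It remains to check that the complex structure on $T_pM$ induced by the stable splitting $\rho_1\oplus\dots\oplus\rho_m$ agrees with the one in Step 2, rather than with a conjugate of some summands. At $p$, the bundle $\rho_j$ restricts to a trivial representation for $j\notin\{j_1,\dots,j_n\}$; these summands are absorbed by $\underline{\mathbb R}^{2(m-n)}$. For $j=j_k$, the fibre of $\rho_{j_k}$ at $p$ is the normal space to $M_{j_k}$, which is the $k$-th coordinate line in the chart. Its complex structure is fixed by the omniorientation, and the sign of $\lambda_{j_k}$ is chosen precisely so that $T_{j_k}$ rotates this oriented normal plane with weight $+1$. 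So in both descriptions the orientation-defining complex structure on the $k$-th line is the one on which $\Phi_v(e_k)$ acts by $z\mapsto z$, and the weight is exactly $w_k(v)$ and not $-w_k(v)$.

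I expect Step 3 to be the main obstacle. It is pure bookkeeping of sign conventions: the convention fixing $\lambda_j$ from the orientations of $M$ and $M_j$, and the identification of $\rho_j$ with the pullback of the normal bundle of $M_j$. I would settle it by checking the case $P=\Delta^1$, $M=\mathbb{C}P^1$, where all conventions can be compared by hand, and then invoking the product structure of the chart.
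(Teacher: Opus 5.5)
Your proposal is correct and is essentially the standard argument behind the cited \cite[Proposition 7.3.18]{BP_torictopo}; the paper gives no proof of its own. In that argument the weights are forced by $\langle w_k(v),\lambda_{j_l}\rangle=\delta_{kl}$: $T_{j_l}$ fixes $M_{j_l}$ pointwise and, by the omniorientation convention, rotates the normal line of $M_{j_l}$ with weight $+1$, which is exactly what your Steps~2 and~3 establish. The $\mathbb{C}P^1$ check you plan is unnecessary, because your Step~3 observation already settles the signs: weight $+1$ of $T_{j_k}$ picks out one of the two invariant complex structures on the irreducible $k$-th line.
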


Combining the preceding two lemmas, $\varphi^U([M(P,\Lambda)])$ can be expressed explicitly in terms of $(P, \Lambda)$,
\begin{align} \label{explicit_formula}
	\varphi^U([M(P,\Lambda)]) = \sum_{v\in \mathcal{V}(P)} \sigma(v) \prod_{i=1}^n \tau_{w_i(v)},
\end{align}
where $W_{v} = (\Lambda_{v}^{-1})^{\top}$, the sign $\sigma(v)$ is determined by the formula \eqref{sign_omniori_quasi}, and $\mathcal{V}(P)$ is the set of vertices of $P$. 

\section{Proof of Main Theorems} \label{section_main_thm}
This section contains the main arguments of the paper. Our goal is to translate the $T^n$-equivariant unitary bordism problem for omnioriented quasitoric manifolds into a combinatorial one under a specific condition. We first establish a combinatorial characterization of the bordism relation and then apply it to prove our main theorems.

We begin with some notation and terminology.
Let $G$ be a compact Lie group, $\Phi$ be a Lie group automorphism of $G$, and $M$ be a smooth manifold equipped with a smooth $G$-action $\varphi$. Denoted by $M_\Phi$ the smooth $G$-manifold with the same underlying smooth manifold as $M$, and with the $G$-action twisted by $\Phi$:
\begin{align*}
	G\times M \xrightarrow{\ \Phi\times\operatorname{id}_M\ } G\times M \stackrel{\varphi}{\longrightarrow}M.
\end{align*}

\begin{definition}
    Two $G$-equivariant unitary manifolds $M_1$ and $M_2$ are called \emph{weakly $G$-equivariantly unitary bordant} if there is an automorphism $\Phi$ of $G$ such that $M_1$ and $(M_2)_{\Phi}$ are equivariant unitary bordant.
\end{definition}

Recall that every Lie group automorphism of $T^n$ is uniquely determined by a matrix $A = (a_{ij})\in \operatorname{GL}(n, \mathbb{Z})$. Explicitly, the corresponding automorphism is
\begin{align*}
	\Phi_A: T^n\longrightarrow T^n, \quad \Phi_A(z_1, \dots, z_n) = \left( \prod_{j = 1}^n z_j^{a_{1j}}, \dots, \prod_{j = 1}^n z_j^{a_{nj}}\right).
\end{align*}

\subsection{Proof of Theorem \ref{main_theorem_toric}}
Suppose that $\Sigma$ is a complete regular fan in $\mathbb{R}^n$ with $\Lambda_\Sigma = (\lambda_1, \dots, \lambda_m)$, and $\sigma\in \Sigma(n)$. For convenience, we denote by $N(\sigma, \Sigma)$ the set of column vectors of $\Lambda_\sigma$, and by $T(\sigma, \Sigma)$ the set of column vectors of $W_\sigma$.
Since $W_\sigma^\top \Lambda_\sigma = I_n$, $N(\sigma, \Sigma)$ and $T(\sigma, \Sigma)$ form two bases of $\mathbb{Z}^n$ that are dual to each other.

\begin{proof}[Proof of Theorem \ref{main_theorem_toric}]
    Let $X_{\Sigma_1}$ and $X_{\Sigma_2}$ be two toric manifolds of complex dimension $n$ corresponding to complete regular fans $\Sigma_1$ and $\Sigma_2$ in $\mathbb{R}^n$, respectively.
	If $X_{\Sigma_1}$ and $X_{\Sigma_2}$ are isomorphic as varieties, then they are weakly equivariantly isomorphic by Lemma \ref{isom_variety_fan}. So they are weakly equivariantly biholomorphic as complex $T^n$-manifolds, which directly implies that they are weakly equivariantly unitary bordant.
	
	Conversely, suppose that $X_{\Sigma_1}$ and $(X_{\Sigma_2})_{\Phi_A}$ are $T^n$-equivariantly unitary bordant for some $A\in \operatorname{GL}(n, \mathbb{Z})$. First note that $(X_{\Sigma_2})_{\Phi_A}$ is still a $T^n$-equivariant unitary manifold, and $(X_{\Sigma_2})_{\Phi_A}^{T^n} = (X_{\Sigma_2})^{T^n}$. Since the twist action does not change the underlying complex structure, all fixed points in the new action still have sign $+1$. Let $p\in (X_{\Sigma_2})^{T^n}$ be the fixed point corresponding to $\sigma\in \Sigma_2(n)$. Then 
    \begin{align*}
        T_p (X_{\Sigma_2})_{\Phi_A} \cong \tau_{A^\top w_1(\sigma)}\oplus \dots \oplus \tau_{A^\top w_n(\sigma)},
    \end{align*}
    So now since $X_{\Sigma_1}$ and $(X_{\Sigma_2})_{\Phi_A}$ are $T^n$-equivariantly unitary bordant, we have
    \begin{align} \label{eq_equal_poly}
        \sum_{\sigma\in \Sigma_1(n)} \prod_{i=1}^n \tau_{w_i(\sigma)} = \sum_{\sigma'\in \Sigma_2(n)} \prod_{i=1}^n \tau_{A^\top w_i(\sigma')}.
    \end{align}
    Since different maximal cones are generated by distinct sets of primitive generators, they determine distinct monomials and thus no cancellation occurs in either side. Hence the formula \eqref{eq_equal_poly} implies that there is a bijection $f: \Sigma_1(n)\to \Sigma_2(n)$ such that 
    \begin{align*}
        T(\sigma, \Sigma_1) = A^\top T(f(\sigma), \Sigma_2) = \{A^\top w_1(\sigma'), \dots, A^\top w_n(\sigma')\}.
    \end{align*}
    Then 
    \begin{align} \label{eq:match_N_toric}
        N(\sigma, \Sigma_1) = A^{-1} N(f(\sigma), \Sigma_2),
    \end{align}
    for all $\sigma\in \Sigma_1(n)$.
    Write $\Lambda_{\Sigma_i} = (\lambda^i_1, \dots, \lambda^i_{m_i})$. Since $\Sigma_1$ and $\Sigma_2$ are complete, the equation \eqref{eq:match_N_toric} implies that $\{A\lambda^1_1, \dots, A\lambda^1_{m_1}\} = \{\lambda^2_1, \dots, \lambda^2_{m_2}\}$.
    It follows that $m_1 = m_2 \eqcolon m$. 
    Regarding $A$ as an automorphism of $\mathbb{R}^n$, it defines a bijection $\Sigma_1(1)\to \Sigma_2(1)$, $\operatorname{cone}(\lambda^1_j)\mapsto \operatorname{cone}(A\lambda^1_j)$, still denoted this map by $A$.
    Also, $A$ induces a bijection $\Sigma_1(n)\to \Sigma_2(n)$, since
    \begin{align*}
        A(\sigma) = A(\operatorname{cone}(N(\sigma, \Sigma_1))) = \operatorname{cone}(A(N(\sigma, \Sigma_1))) = \operatorname{cone}(N(f(\sigma), \Sigma_2)) = f(\sigma).
    \end{align*}
    for any $\sigma\in \Sigma(n)$.
    Therefore, $A$ maps cones of $\Sigma_1$ bijectively onto cones of $\Sigma_2$: for any cone $\sigma_1\in \Sigma_1$, choose a maximal cone $\sigma\in \Sigma_1(n)$ such that $\sigma_1\subseteq \sigma$. Since $A$ is a linear transformation, $A(\sigma_1)\subseteq A(\sigma)\in \Sigma_2(n)$. Hence, $A(\sigma_1)\in \Sigma_2(n)$. Therefore, $X_{\Sigma_1}$ and $X_{\Sigma_2}$ are isomorphic by Lemma \ref{isom_variety_fan}.
\end{proof}

\subsection{Proof of Theorems \ref{main_theorem_quasitoric} and \ref{main_theorem_smallcover}}
Suppose that $(P,\Lambda)$ is a combinatorial quasitoric pair and $v\in P$ is a vertex. Denote by
\begin{align*}
	N(v, \Lambda) = \{\lambda_{j_1}, \dots, \lambda_{j_n}\}
\end{align*}
the set of column vectors of $\Lambda_v$, and by
\begin{align*}
	T(v, \Lambda) = \{w_1(v), \dots, w_n(v)\}
\end{align*}
the set of column vectors of $W_v$.
Then $N(v,\Lambda)$ and $T(v,\Lambda)$ form two bases of $\mathbb{Z}^n$ that are dual to each other.

In the case of toric varieties, we use the fact that no terms cancel on either side of \eqref{eq_equal_poly}. Consequently, equality of the fixed-point polynomials induces a unimodular bijection between the fans.
This relies on the fact that different rays of a fan $\Sigma$ have different primitive generators, so the column vectors of the matrix $\Lambda_\Sigma$ are pairwise distinct. However, for a quasitoric manifold, distinct fixed points may have the same sign and tangent representation. We therefore impose an analogous restriction on the associated combinatorial quasitoric pairs.

\begin{definition}
	We say that a characteristic matrix $\Lambda$ satisfies the \emph{injectivity condition} if its column vectors are pairwise distinct.
\end{definition}

The injectivity condition implies that different vertices have different tangent representations.

\begin{lemma}\label{inj_imply_dif_tangent}
	Let $(P,\Lambda)$ be a combinatorial quasitoric pair such that $\Lambda$ satisfies the injectivity condition. If $v_1$ and $v_2$ are two distinct vertices of $P$, then
	\begin{align*}
		T(v_1,\Lambda)\neq T(v_2,\Lambda).
	\end{align*}
	Equivalently,
	\begin{align*}
		\prod_{i=1}^n\tau_{w_i(v_1)}
		\neq
		\prod_{i=1}^n\tau_{w_i(v_2)}
	\end{align*}
	as monomials in $\mathbb Z[J_n]$.
\end{lemma}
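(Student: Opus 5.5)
We argue by contraposition: we show that $T(v_1,\Lambda)=T(v_2,\Lambda)$ forces $v_1=v_2$. The key observation is that $T(v,\Lambda)$ determines $N(v,\Lambda)$ as a set, because $N(v,\Lambda)$ is the dual basis of $T(v,\Lambda)$. Under the injectivity condition, the set $N(v,\Lambda)$ in turn determines the set of facets meeting at $v$, and in a simple polytope that set of facets determines the vertex.

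\textbf{Step 1: recover $N$ from $T$.} The relation $W_v^\top\Lambda_v=I_n$ says that each $\lambda_{j_k}$ is the unique vector with $\langle w_i(v),\lambda_{j_k}\rangle=\delta_{ik}$. Here $\langle\cdot,\cdot\rangle$ is the standard pairing on $\mathbb Z^n$. Suppose $T(v_1,\Lambda)=T(v_2,\Lambda)$ as sets. Then both vertices have the same basis of $\mathbb Z^n$, possibly listed in a different order. Taking dual bases gives $N(v_1,\Lambda)=N(v_2,\Lambda)$ as sets; reordering the basis only reorders the dual basis.

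\textbf{Step 2: recover the facets.} Write $v_1=F_{j_1}\cap\dots\cap F_{j_n}$ and $v_2=F_{k_1}\cap\dots\cap F_{k_n}$. By injectivity, $j\mapsto\lambda_j$ is injective on $\{1,\dots,m\}$. Hence the equality $\{\lambda_{j_1},\dots,\lambda_{j_n}\}=\{\lambda_{k_1},\dots,\lambda_{k_n}\}$ gives $\{j_1,\dots,j_n\}=\{k_1,\dots,k_n\}$. Therefore $v_1$ and $v_2$ are the intersection of the same $n$ facets. In a simple polytope, a nonempty intersection of $n$ facets is a single vertex, so $v_1=v_2$, which contradicts the hypothesis.

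\textbf{Step 3: the equivalence with monomials.} The set $J_n$ of isomorphism classes of nontrivial irreducible complex $T^n$-representations corresponds to $\mathbb Z^n\setminus\{0\}$ via $\alpha\mapsto\tau_\alpha$. Since $\mathbb Z[J_n]$ is a polynomial ring on these generators, a monomial $\prod\tau_{w_i(v)}$ determines the multiset $\{w_i(v)\}$. Because $W_v\in\operatorname{GL}(n,\mathbb Z)$, the $w_i(v)$ are distinct, so this multiset is just the set $T(v,\Lambda)$. Thus the two formulations of the lemma are equivalent.

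The main point requiring care is that Step 1 compares unordered sets: one has to note explicitly that permuting a basis permutes its dual basis correspondingly. Otherwise the argument is routine.
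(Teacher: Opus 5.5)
Your proposal is correct and follows essentially the same chain as the paper's proof: distinct vertices of a simple polytope have distinct facet sets, injectivity then gives $N(v_1,\Lambda)\neq N(v_2,\Lambda)$, uniqueness of dual bases gives $T(v_1,\Lambda)\neq T(v_2,\Lambda)$, and $\tau_\alpha\cong\tau_\beta$ if and only if $\alpha=\beta$ gives distinct monomials. The only differences are that you run the argument by contraposition and are more explicit about unordered sets versus multisets.
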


\begin{proof}
	Since $v_1\neq v_2$, they are incident to different sets of facets. Since the column vectors of $\Lambda$ are pairwise distinct, it follows that $N(v_1,\Lambda)\neq N(v_2,\Lambda)$. Since dual bases are uniquely determined, it follows that $T(v_1,\Lambda)\neq T(v_2,\Lambda)$. Finally, for $\alpha,\beta\in\mathbb Z^n$, the one-dimensional complex $T^n$-representations $\tau_\alpha$ and $\tau_\beta$ are isomorphic if and only if $\alpha=\beta$. Therefore,
	\begin{align*}
		\prod_{i=1}^n\tau_{w_i(v_1)} \ne \prod_{i=1}^n\tau_{w_i(v_2)}.
	\end{align*}
\end{proof}

Denote by $\mathcal{P}(P)$ the face poset of $P$. Let $P_1$ and $P_2$ be oriented simple polytopes. A poset isomorphism $h: \mathcal{P}(P_1)\to \mathcal{P}(P_2)$ is said to be \emph{orientation-preserving} if for each vertex $v\in P_1$, say $v = F_{1, j_1}\cap\cdots\cap F_{1, j_n}$, $h(F_{1,j_r})=F_{2,k_r}$,
\begin{align*}
	\operatorname{sign}_{o_{P_1}}(\mathbf{n}_{1, j_1}, \dots, \mathbf{n}_{1, j_n}) = \operatorname{sign}_{o_{P_2}}(\mathbf{n}_{2, k_1}, \dots, \mathbf{n}_{2, k_n}).
\end{align*}

We are now ready to give a combinatorial criterion of the $T^n$-equivariant unitary bordism relation between omnioriented quasitoric manifolds satisfying the injectivity condition.

\begin{theorem} \label{combinatorial_equiv_bordism}
	Let $P_1, P_2\subset\mathbb R^n$ be $n$-dimensional oriented simple polytopes. Let $(P_1,\Lambda_1)$ and $(P_2,\Lambda_2)$ be combinatorial quasitoric pairs. Suppose that both $\Lambda_1$ and $\Lambda_2$ satisfy the injectivity condition. Then the following statements are equivalent:
	\begin{enumerate}
		\item $[M(P_1,\Lambda_1)] = [M(P_2,\Lambda_2)]\in \mathcal Z_{2n}^{U,T^n}$.
		
		\item There exists an orientation-preserving poset isomorphism $h: \mathcal{P}(P_1)\to \mathcal{P}(P_2)$, such that
		\begin{align*}
			\Lambda_1=h^*\Lambda_2.
		\end{align*}
	\end{enumerate}
	More precisely, write $\mathcal F(P_i) = \{F_{i,1},\dots,F_{i,m_i}\}$, $\Lambda_i = (\lambda^i_1,\dots,\lambda^i_{m_i})$.
	If $h$ is an isomorphism, then necessarily $m_1=m_2\eqcolon m$, and it induces a permutation $\theta\in \mathcal{S}_m$ determined by
	\begin{align*}
		h(F_{1,j}) = F_{2,\theta(j)}, \quad 1\leq j\leq m.
	\end{align*}
	We define
	\begin{align*}
		h^*\Lambda_2 \coloneq (\lambda^2_{\theta(1)},\dots,\lambda^2_{\theta(m)}).
	\end{align*}
\end{theorem}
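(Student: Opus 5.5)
The plan is to compare both sides through the explicit fixed-point formula \eqref{explicit_formula} and the injectivity of $\varphi^U$. Then everything reduces to combinatorics of the vertex data $(\sigma(v), T(v,\Lambda))$.

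For (2)$\Rightarrow$(1), take the permutation $\theta$ induced by $h$, so that $\lambda^1_j=\lambda^2_{\theta(j)}$. The poset isomorphism $h$ gives a bijection on vertices: $v=F_{1,j_1}\cap\cdots\cap F_{1,j_n}$ is sent to $h(v)=F_{2,\theta(j_1)}\cap\cdots\cap F_{2,\theta(j_n)}$. The sets $N(v,\Lambda_1)$ and $N(h(v),\Lambda_2)$ then coincide, so their dual bases $T(v,\Lambda_1)$ and $T(h(v),\Lambda_2)$ also coincide. To compare signs, order the facets at $h(v)$ as $\theta(j_1),\dots,\theta(j_n)$, which is allowed since the sign formula is independent of ordering. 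With this ordering the two determinants in \eqref{sign_omniori_quasi} are equal. The two orientation signs are equal because $h$ is orientation-preserving. Hence $\sigma(v)=\sigma(h(v))$, the two sums in \eqref{explicit_formula} agree term by term, and so do the bordism classes.

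For (1)$\Rightarrow$(2), the bordism equality gives $\varphi^U([M(P_1,\Lambda_1)])=\varphi^U([M(P_2,\Lambda_2)])$. By Lemma \ref{inj_imply_dif_tangent}, the monomials attached to distinct vertices of $P_i$ are distinct. Each sum is therefore a sum of distinct monomials with coefficients $\pm1$, and no cancellation occurs. Comparing coefficients gives a bijection $g:\mathcal V(P_1)\to\mathcal V(P_2)$ with $T(v,\Lambda_1)=T(g(v),\Lambda_2)$ and $\sigma(v)=\sigma(g(v))$. Taking dual bases gives $N(v,\Lambda_1)=N(g(v),\Lambda_2)$. Every facet contains a vertex, so the sets of columns of $\Lambda_1$ and $\Lambda_2$ coincide. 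Since both matrices have pairwise distinct columns, $m_1=m_2$, and there is a unique permutation $\theta$ with $\lambda^1_j=\lambda^2_{\theta(j)}$.

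Define $h$ on faces by $F_{1,j_1}\cap\cdots\cap F_{1,j_k}\mapsto F_{2,\theta(j_1)}\cap\cdots\cap F_{2,\theta(j_k)}$. The main step is to show that this is a well-defined poset isomorphism. By injectivity of the columns, $v\in F_{1,j}$ if and only if $\lambda^1_j\in N(v,\Lambda_1)=N(g(v),\Lambda_2)$, which holds if and only if $g(v)\in F_{2,\theta(j)}$. So $\theta$ and $g$ identify the vertex–facet incidence relations of $P_1$ and $P_2$. For a simple polytope, the face poset is determined by this incidence relation. Each face corresponds to the set of vertices it contains, with inclusion as the order. Moreover, a set $S$ of facets has nonempty intersection exactly when some vertex lies on all of them. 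Hence $h$ is a poset isomorphism with $h(F_{1,j})=F_{2,\theta(j)}$ and $\Lambda_1=h^*\Lambda_2$.

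It remains to check that $h$ is orientation-preserving. At each vertex $v$, order the facets as $j_1,\dots,j_n$ and the facets at $g(v)$ as $\theta(j_1),\dots,\theta(j_n)$. The determinants in \eqref{sign_omniori_quasi} are then equal. Since $\sigma(v)=\sigma(g(v))$ and the determinants are $\pm1$, the orientation signs must agree.

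The only delicate point is the face-poset reconstruction from vertex–facet incidence. I would handle it by the standard fact that faces of a simple polytope are exactly the nonempty intersections of facets, determined by the sets of vertices they contain.
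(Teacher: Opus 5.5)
Your proposal is correct and follows essentially the same route as the paper. You reduce via injectivity of $\varphi^U$ and Lemma \ref{inj_imply_dif_tangent} to a sign- and tangent-preserving vertex bijection, recover the column permutation $\theta$ from the injectivity condition, transport vertex--facet incidences to build $h$, and read off orientation-preservation from \eqref{sign_omniori_quasi}; your explicit argument that the face poset is recovered from vertex--facet incidence simply spells out a step the paper states in one line.
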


\begin{proof}
	Let $\pi_i: M(P_i,\Lambda_i)\to P_i$, $i=1,2$, be the orbit maps. By injectivity of the fixed-point homomorphism $\varphi^U$, we have
	\begin{align*}
		[M(P_1,\Lambda_1)] = [M(P_2,\Lambda_2)] \iff \varphi^U([M(P_1,\Lambda_1)]) = \varphi^U([M(P_2,\Lambda_2)])
	\end{align*}
	By \eqref{explicit_formula}, the latter equality is equivalent to
	\begin{align}\label{equ1}
		\sum_{v\in \mathcal{V}(P_1)} \sigma(v) \prod_{k=1}^n\tau_{w_k(v)} = \sum_{u\in \mathcal{V}(P_2)} \sigma(u) \prod_{k=1}^n\tau_{w_k(u)}.
	\end{align}
	By Lemma~\ref{inj_imply_dif_tangent}, the monomials occurring on each side of \eqref{equ1} are pairwise distinct. Hence no cancellation occurs within either side. Therefore, \eqref{equ1} holds if and only if there exists a bijection $f: \mathcal{V}(P_1)\to \mathcal{V}(P_2)$, such that for every $v\in \mathcal{V}(P_1)$,
	\begin{align}\label{eq:match_sign}
		\sigma(v) = \sigma(f(v))
	\end{align}
	and
	\begin{align}\label{eq:match_T}
		T(v,\Lambda_1) = T(f(v),\Lambda_2).
	\end{align}
	
	We first prove $(1)\Rightarrow(2)$. Assume that $[M(P_1,\Lambda_1)] = [M(P_2,\Lambda_2)]$. Then there exists a bijection $f: \mathcal{V}(P_1)\to \mathcal{V}(P_2)$ satisfying \eqref{eq:match_sign} and \eqref{eq:match_T}. So
	\begin{align} \label{eq:match_N}
		N(v,\Lambda_1) = N(f(v),\Lambda_2),
	\end{align}
	for every $v\in \mathcal{V}(P_1)$.
	Hence, every column of $\Lambda_1$ belongs to $N(v,\Lambda_1)$ for some $v\in \mathcal{V}(P_1)$. By \eqref{eq:match_N}, it is also a column of $\Lambda_2$. Applying the same argument to $f^{-1}$ yields $\{\lambda^1_1,\dots,\lambda^1_{m_1}\} = \{\lambda^2_1,\dots,\lambda^2_{m_2}\}$.
	Since the column vectors of both characteristic matrices are pairwise distinct, it follows that $m_1=m_2\eqcolon m$ and there exists a unique permutation $\theta\in \mathcal{S}_m$ such that
	\begin{align}\label{eq:column_correspondence}
		\lambda^1_j = \lambda^2_{\theta(j)} \quad \text{ for }1\leq j\leq m.
	\end{align}
	For every $v\in \mathcal{V}(P_1)$ and every facet $F_{1,j}$, we have
	\begin{align*}
		v\in F_{1,j} \iff \lambda^1_j\in N(v,\Lambda_1) \iff \lambda^2_{\theta(j)} \in N(f(v),\Lambda_2) \iff f(v)\in F_{2,\theta(j)}.
	\end{align*}
	Thus the bijections $v\mapsto f(v)$ on vertices and $F_{1,j}\mapsto F_{2,\theta(j)}$ on facets preserve all vertex-facet incidences, and these bijections determine an isomorphism $h: \mathcal{P}(P_1)\to \mathcal{P}(P_2)$ such that $h(F_{1, j}) = F_{2, \theta(j)}$. 
	By \eqref{eq:column_correspondence}, $\Lambda_1=h^*\Lambda_2$.
	
	It remains to prove that $h$ preserves the orientation. Choose any vertex $v = F_{1,j _1}\cap\cdots\cap F_{1, j_n}$. Then $f(v) = F_{2, \theta(j_1)} \cap\cdots\cap F_{2, \theta(j_n)}$. Let $\mathbf{n}_{i, j}$ be an inward-pointing normal vector to the facet $F_{i, j}$. By the fixed-point sign formula \eqref{sign_omniori_quasi},
	\begin{align*}
		\sigma(v) &= \det(\lambda^1_{j_1},\dots,\lambda^1_{j_n})\operatorname{sign}_{o_{P_1}} (\mathbf{n}_{1,j_1},\dots,\mathbf{n}_{1,j_n}), \\
		\sigma(f(v)) &= \det(\lambda^2_{\theta(j_1)},
		\dots, \lambda^2_{\theta(j_n)})\operatorname{sign}_{o_{P_2}}(\mathbf{n}_{2,\theta(j_1)}, \dots, \mathbf{n}_{2,\theta(j_n)}).
	\end{align*}
	By \eqref{eq:column_correspondence}, $(\lambda^1_{j_1},\dots,\lambda^1_{j_n}) = (\lambda^2_{\theta(j_1)}, \dots, \lambda^2_{\theta(j_n)})$, while \eqref{eq:match_sign} gives $\sigma(v) = \sigma(f(v))$. Therefore, 
	\begin{align*}
		\operatorname{sign}_{o_{P_1}} (\mathbf{n}_{1,j_1},\dots,\mathbf{n}_{1,j_n}) = \operatorname{sign}_{o_{P_2}}(\mathbf{n}_{2,\theta(j_1)}, \dots, \mathbf{n}_{2,\theta(j_n)}),
	\end{align*}
	which indicates that $h$ is orientation-preserving. Hence $(1)\Rightarrow(2)$.
	
	Conversely, suppose that there exists an orientation-preserving isomorphism $h: \mathcal{P}(P_1)\to  \mathcal{P}(P_2)$, such that $\Lambda_1 = h^*\Lambda_2$.
	Let $f: \mathcal{V}(P_1)\to \mathcal{V}(P_2)$ be the induced bijection of vertices. If $v = F_{1, j_1}\cap\cdots\cap F_{1, j_n}$, then $f(v) = F_{2,\theta(j_1)} \cap\cdots\cap F_{2,\theta(j_n)}$. Since $\lambda^1_j = \lambda^2_{\theta(j)}$ for every $j$, we have $N(v,\Lambda_1) = N(f(v),\Lambda_2)$.
	Equivalently, after taking the dual bases, $T(v,\Lambda_1) = T(f(v),\Lambda_2)$.
	Moreover, since $h$ is orientation-preserving, then $\operatorname{sign}_{o_{P_1}} (\mathbf{n}_{1,j_1},\dots, \mathbf{n}_{1,j_n}) = \operatorname{sign}_{o_{P_2}}(\mathbf{n}_{2,\theta(j_1)}, \dots, \mathbf{n}_{2,\theta(j_n)})$.
	Together with $(\lambda^1_{j_1},\dots,\lambda^1_{j_n}) = (\lambda^2_{\theta(j_1)}, \dots, \lambda^2_{\theta(j_n)})$,
	the fixed-point sign formula \eqref{sign_omniori_quasi} yields $\sigma(v) = \sigma(f(v))$.
	Therefore, $[M(P_1,\Lambda_1)] = [M(P_2,\Lambda_2)]\in\mathcal Z_*^{U,T^n}$. Thus $(2)\Rightarrow(1)$.
\end{proof}

This theorem gives a combinatorial characterization of when two omnioriented quasitoric manifolds are equivariant unitary bordant.
We can see from the proof that the property (2) always implies the property (1), while the injectivity condition is used only for the converse implication. 
The injectivity condition guarantees that distinct vertices give rise to distinct tangent-representation monomials, so that the local fixed-point terms can be identified individually from the fixed-point expression of the equivariant unitary bordism class. This makes it possible to recover the correspondence between vertices, and ultimately the combinatorial data of the omnioriented quasitoric manifolds.

We point out that the result fails without the injectivity condition, even if distinct fixed points have distinct tangent representations.

\begin{example} \label{counterexample}
	Consider a counterclockwise-oriented nonagon $P\subset \mathbb{R}^2$ and the combinatorial quasitoric pairs $(P, \Lambda_i)$ shown in Figure \ref{figure_counterexam} for $i = 1, 2$. 	
    \begin{figure}[t]
		\centering
		
		\begin{tikzpicture}[
			vertex/.style={circle, fill=black, inner sep=1.2pt},
			vlabel/.style={font=\scriptsize},
			elabel/.style={font=\scriptsize, inner sep=1.2pt}
			]
			
			\begin{scope}[xshift=0cm]
				\coordinate (C1) at (0,0);
				
				\foreach \i in {1,...,9}{
					\coordinate (A\i) at ({90-(\i-1)*40}:1.55);
				}
				
				\foreach \i [evaluate=\i as \j using {int(mod(\i,9)+1)}] in {1,...,9}{
					\draw[thick] (A\i) -- (A\j);
				}
				
				\foreach \i in {1,...,9}{
					\node[vertex] at (A\i) {};
				}
				
				\node[vlabel, above]       at (A1) {$+1$};
				\node[vlabel, above right] at (A2) {$+1$};
				\node[vlabel, right]       at (A3) {$+1$};
				\node[vlabel, right]       at (A4) {$+1$};
				\node[vlabel, below right] at (A5) {$+1$};
				\node[vlabel, below]       at (A6) {$+1$};
				\node[vlabel, below left]  at (A7) {$+1$};
				\node[vlabel, left]        at (A8) {$+1$};
				\node[vlabel, above left]  at (A9) {$+1$};
				
				\coordinate (Am12) at ($(A1)!0.5!(A2)$);
				\coordinate (Am23) at ($(A2)!0.5!(A3)$);
				\coordinate (Am34) at ($(A3)!0.5!(A4)$);
				\coordinate (Am45) at ($(A4)!0.5!(A5)$);
				\coordinate (Am56) at ($(A5)!0.5!(A6)$);
				\coordinate (Am67) at ($(A6)!0.5!(A7)$);
				\coordinate (Am78) at ($(A7)!0.5!(A8)$);
				\coordinate (Am89) at ($(A8)!0.5!(A9)$);
				\coordinate (Am91) at ($(A9)!0.5!(A1)$);
				
				\node[elabel] at ($(C1)!1.40!(Am12)$) {$\begin{pmatrix}1\\0\end{pmatrix}$};
				\node[elabel] at ($(C1)!1.40!(Am23)$) {$\begin{pmatrix}-3\\-1\end{pmatrix}$};
				\node[elabel] at ($(C1)!1.40!(Am34)$) {$\begin{pmatrix}2\\1\end{pmatrix}$};
				\node[elabel] at ($(C1)!1.40!(Am45)$) {$\begin{pmatrix}1\\0\end{pmatrix}$};
				\node[elabel] at ($(C1)!1.40!(Am56)$) {$\begin{pmatrix}-2\\-1\end{pmatrix}$};
				\node[elabel] at ($(C1)!1.40!(Am67)$) {$\begin{pmatrix}1\\1\end{pmatrix}$};
				\node[elabel] at ($(C1)!1.40!(Am78)$) {$\begin{pmatrix}1\\0\end{pmatrix}$};
				\node[elabel] at ($(C1)!1.40!(Am89)$) {$\begin{pmatrix}-1\\-1\end{pmatrix}$};
				\node[elabel] at ($(C1)!1.40!(Am91)$) {$\begin{pmatrix}0\\1\end{pmatrix}$};
				
				\node[font=\normalsize] at (0,-2.85) {$(P, \Lambda_1)$};
			\end{scope}
			
			\begin{scope}[xshift=7.4cm]
				\coordinate (C2) at (0,0);
				
				\foreach \i in {1,...,9}{
					\coordinate (B\i) at ({90-(\i-1)*40}:1.55);
				}
				
				\foreach \i [evaluate=\i as \j using {int(mod(\i,9)+1)}] in {1,...,9}{
					\draw[thick] (B\i) -- (B\j);
				}
				
				\foreach \i in {1,...,9}{
					\node[vertex] at (B\i) {};
				}
				
				\node[vlabel, above]       at (B1) {$+1$};
				\node[vlabel, above right] at (B2) {$+1$};
				\node[vlabel, right]       at (B3) {$+1$};
				\node[vlabel, right]       at (B4) {$+1$};
				\node[vlabel, below right] at (B5) {$+1$};
				\node[vlabel, below]       at (B6) {$+1$};
				\node[vlabel, below left]  at (B7) {$+1$};
				\node[vlabel, left]        at (B8) {$+1$};
				\node[vlabel, above left]  at (B9) {$+1$};
				
				\coordinate (Bm12) at ($(B1)!0.5!(B2)$);
				\coordinate (Bm23) at ($(B2)!0.5!(B3)$);
				\coordinate (Bm34) at ($(B3)!0.5!(B4)$);
				\coordinate (Bm45) at ($(B4)!0.5!(B5)$);
				\coordinate (Bm56) at ($(B5)!0.5!(B6)$);
				\coordinate (Bm67) at ($(B6)!0.5!(B7)$);
				\coordinate (Bm78) at ($(B7)!0.5!(B8)$);
				\coordinate (Bm89) at ($(B8)!0.5!(B9)$);
				\coordinate (Bm91) at ($(B9)!0.5!(B1)$);
				
				\node[elabel] at ($(C2)!1.40!(Bm12)$) {$\begin{pmatrix}1\\0\end{pmatrix}$};
				\node[elabel] at ($(C2)!1.40!(Bm23)$) {$\begin{pmatrix}-2\\-1\end{pmatrix}$};
				\node[elabel] at ($(C2)!1.40!(Bm34)$) {$\begin{pmatrix}1\\1\end{pmatrix}$};
				\node[elabel] at ($(C2)!1.40!(Bm45)$) {$\begin{pmatrix}1\\0\end{pmatrix}$};
				\node[elabel] at ($(C2)!1.40!(Bm56)$) {$\begin{pmatrix}-3\\-1\end{pmatrix}$};
				\node[elabel] at ($(C2)!1.40!(Bm67)$) {$\begin{pmatrix}2\\1\end{pmatrix}$};
				\node[elabel] at ($(C2)!1.40!(Bm78)$) {$\begin{pmatrix}1\\0\end{pmatrix}$};
				\node[elabel] at ($(C2)!1.40!(Bm89)$) {$\begin{pmatrix}-1\\-1\end{pmatrix}$};
				\node[elabel] at ($(C2)!1.40!(Bm91)$) {$\begin{pmatrix}0\\1\end{pmatrix}$};
				
				\node[font=\normalsize] at (0,-2.85) {$(P, \Lambda_2)$};
			\end{scope}
		\end{tikzpicture}
		\caption{A counterexample of Theorem \ref{combinatorial_equiv_bordism} without the injectivity condition}
		\label{figure_counterexam}
	\end{figure}
    The vectors adjacent to the edges are the corresponding characteristic vectors of the edges, and the sign at each fixed point is $+1$. 
	It follows directly from the fixed-point data that $M(P, \Lambda_1)$ and $M(P, \Lambda_2)$ represent the same nonzero bordism class in $\mathcal{Z}_4^{U, T^2}$. However, neither $\Lambda_1$ nor $\Lambda_2$ satisfies the injectivity condition, and there is no orientation-preserving combinatorial automorphism $h$ of $P$ such that $\Lambda_1 = h^*\Lambda_2$.
\end{example}

Recall that for an omnioriented quasitoric manifold $M(P, \Lambda)$ and a Lie group automorphism $\Phi_A$ of $T^n$, $M(P, \Lambda)_{\Phi_A}$ is still an omnioriented quasitoric manifold whose corresponding combinatorial quasitoric pair is $(P_A, A^{-1}\Lambda)$, where $P_A$ is the same polytope as $P$ with orientation $\operatorname{sign}(\det A) o_P$. Then the following conclusion is a direct corollary.

\begin{corollary} \label{cor_weak_equiv_unitary_bor}
	Keep the notation and conventions of Theorem \ref{combinatorial_equiv_bordism}. Fix $A\in\operatorname{GL}(n,\mathbb Z)$. Then the following statements are equivalent:
	\begin{enumerate}
		\item $[M(P_1,\Lambda_1)] = [M(P_2,\Lambda_2)_{\Phi_A}]\in \mathcal Z_*^{U,T^n}$. 
		
		\item There exists an isomorphism $h: \mathcal{P}(P_1)\to \mathcal{P}(P_2)$, such that
		\begin{align*}
			A \Lambda_1=h^*\Lambda_2,
		\end{align*}
		where $h$ is orientation-preserving if $\det A>0$, and orientation-reversing if $\det A<0$.
	\end{enumerate}
\end{corollary}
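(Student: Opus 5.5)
The plan is to reduce Corollary \ref{cor_weak_equiv_unitary_bor} directly to Theorem \ref{combinatorial_equiv_bordism}. By the remark preceding the statement, $M(P_2,\Lambda_2)_{\Phi_A}$ is the omnioriented quasitoric manifold $M((P_2)_A, A^{-1}\Lambda_2)$. Here $(P_2)_A$ is $P_2$ with orientation $\operatorname{sign}(\det A)\,o_{P_2}$. Since $A^{-1}$ is invertible, the columns of $A^{-1}\Lambda_2$ are pairwise distinct exactly when those of $\Lambda_2$ are, so $A^{-1}\Lambda_2$ still satisfies the injectivity condition. Hence Theorem \ref{combinatorial_equiv_bordism} applies to the pairs $(P_1,\Lambda_1)$ and $((P_2)_A, A^{-1}\Lambda_2)$.

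The theorem then says that (1) holds if and only if there is an orientation-preserving poset isomorphism $h:\mathcal P(P_1)\to\mathcal P((P_2)_A)$ with $\Lambda_1 = h^*(A^{-1}\Lambda_2)$. Two translations are needed.

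First, $h^*$ merely permutes columns, so $h^*(A^{-1}\Lambda_2) = A^{-1}h^*\Lambda_2$. The matrix condition is therefore equivalent to $A\Lambda_1 = h^*\Lambda_2$.

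Second, the face posets of $P_2$ and $(P_2)_A$ coincide, and only the orientation changes. For any vertex and ordered inward normals,
\begin{align*}
\operatorname{sign}_{\operatorname{sign}(\det A)o_{P_2}}(\mathbf n_{2,k_1},\dots,\mathbf n_{2,k_n}) = \operatorname{sign}(\det A)\,\operatorname{sign}_{o_{P_2}}(\mathbf n_{2,k_1},\dots,\mathbf n_{2,k_n}).
\end{align*}
So $h$ is orientation-preserving as a map to $(P_2)_A$ exactly when it is orientation-preserving into $P_2$ if $\det A>0$. When $\det A<0$, it is exactly when $h$ reverses the sign at every vertex, which is the natural meaning of orientation-reversing. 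This gives (1)$\Leftrightarrow$(2).

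The only step needing care is the identification $M(P,\Lambda)_{\Phi_A}\cong M(P_A,A^{-1}\Lambda)$ as equivariant unitary manifolds. The paper states it just before the corollary, so I would take it as given. If a check were wanted, it could be done directly on fixed-point data via \eqref{explicit_formula}. Twisting replaces each weight $w$ by $A^\top w$, and $(A^{-1}\Lambda_v)^{-\top} = A^\top W_v$, which confirms the matrix. The sign changes by $\operatorname{sign}\det(A^{-1})$, which \eqref{sign_omniori_quasi} absorbs by reorienting $P$; that would require $\sigma(p)$ to flip exactly when $\det A<0$.

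For this fixed-point check, one can also bypass the identification altogether. Compute $\varphi^U$ of the twisted manifold and rerun the proof of Theorem \ref{combinatorial_equiv_bordism} verbatim with $A^{-1}\Lambda_2$ in place of $\Lambda_2$.
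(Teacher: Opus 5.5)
Your proposal is correct and follows the paper's own route: identify $M(P_2,\Lambda_2)_{\Phi_A}$ with $M((P_2)_A,A^{-1}\Lambda_2)$ and apply Theorem~\ref{combinatorial_equiv_bordism}, and your two translations ($h^*(A^{-1}\Lambda_2)=A^{-1}h^*\Lambda_2$ and the rescaling of vertex orientation signs by $\operatorname{sign}(\det A)$) are exactly the details the paper leaves implicit. One slip in your optional fixed-point check: twisting does not alter the stably complex structure, so $\sigma(p)$ does \emph{not} flip, and the reorientation of $P$ is there precisely so that $\det(A^{-1}\Lambda_v)\operatorname{sign}_{\operatorname{sign}(\det A)o_P}(\mathbf{n}_{j_1},\dots,\mathbf{n}_{j_n})=\det(\Lambda_v)\operatorname{sign}_{o_P}(\mathbf{n}_{j_1},\dots,\mathbf{n}_{j_n})$ reproduces that unchanged sign.
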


\begin{proof}[Proof of Theorem \ref{main_theorem_quasitoric}]
	By \cite[Proposition 7.3.11]{BP_torictopo}, there exists a $T^n$-equivariant homeomorphism between $M(P_1, \Lambda_1)$ and $M(P_2, \Lambda_2)$ preserving omniorientations if and only if there exists a homeomorphism $\widetilde{h}: P_1\to P_2$ preserving face relations and orientations such that $\Lambda_1 = \widetilde{h}^*\Lambda_2$. 
	So it suffices to prove that they are also equivalent to the property (2) in Theorem \ref{combinatorial_equiv_bordism}. First, such a homeomorphism $\widetilde{h}: P_1\to P_2$ induces an isomorphism $h: \mathcal{P}(P_1)\to \mathcal{P}(P_2)$ satisfying all conditions in the property (2) in Theorem \ref{combinatorial_equiv_bordism}. 
	
    Conversely, if there is an orientation-preserving isomorphism $h: \mathcal{P}(P_1)\to \mathcal{P}(P_2)$, such that $\Lambda_1=h^*\Lambda_2$. Then by \cite[Corollary 5.2]{Wiemeler2013}, there is a diffeomorphism $\widetilde{h}: P_1\to P_2$ (with the natural smooth structure coming from the embedding $P_i\hookrightarrow \mathbb{R}^n$) such that $\widetilde{h}(F) = h(F)$ for each face $F$ of $P_1$. This diffeomorphism clearly preserves orientations and face relations.
\end{proof}

Example \ref{counterexample} also gives a counterexample of Theorem \ref{main_theorem_quasitoric} without the injectivity condition. It shows that $M(P, \Lambda_1)$ and $M(P, \Lambda_2)$ are equivariantly unitary bordant, but there is no equivariant homeomorphism preserving omniorientations.

\begin{corollary} \label{cor_weak}
    Use the same notation and conventions of Theorem \ref{combinatorial_equiv_bordism}. Then there exists a weakly $T^n$-equivariant homeomorphism between $M(P_1, \Lambda_1)$ and $M(P_2, \Lambda_2)$ preserving omniorientations if and only if they are weakly equivariantly unitary bordant.
\end{corollary}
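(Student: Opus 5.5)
The plan is to reduce Corollary \ref{cor_weak} to Corollary \ref{cor_weak_equiv_unitary_bor} in exactly the way Theorem \ref{main_theorem_quasitoric} was reduced to Theorem \ref{combinatorial_equiv_bordism}.

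First we fix the meaning of the left-hand side. A weakly $T^n$-equivariant homeomorphism preserving omniorientations is a pair $(A,F)$ with $A\in\operatorname{GL}(n,\mathbb Z)$ and $F\colon M(P_1,\Lambda_1)\to M(P_2,\Lambda_2)_{\Phi_A}$ a $T^n$-equivariant homeomorphism preserving omniorientations. Here the twisted manifold carries the induced omniorientation: the same orientations of the underlying manifold and of the characteristic submanifolds. By the remark preceding Corollary \ref{cor_weak_equiv_unitary_bor}, $M(P_2,\Lambda_2)_{\Phi_A}$ is the omnioriented quasitoric manifold $M((P_2)_A, A^{-1}\Lambda_2)$.

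Next we check that $(P_2)_A$ and $A^{-1}\Lambda_2$ satisfy the hypotheses of Theorem \ref{main_theorem_quasitoric}.
\begin{itemize}
\item $(P_2)_A$ is an oriented simple polytope, namely $P_2$ with orientation $\operatorname{sign}(\det A)\,o_{P_2}$.
\item $A^{-1}\Lambda_2$ satisfies the injectivity condition, because $A^{-1}$ is injective on $\mathbb Z^n$ and so pairwise distinct columns stay pairwise distinct.
\end{itemize}
For a fixed $A$, Theorem \ref{main_theorem_quasitoric} therefore gives the equivalence of two statements:
\begin{enumerate}
\item there is an omniorientation-preserving $T^n$-equivariant homeomorphism $M(P_1,\Lambda_1)\to M(P_2,\Lambda_2)_{\Phi_A}$;
\item $[M(P_1,\Lambda_1)]=[M(P_2,\Lambda_2)_{\Phi_A}]$ in $\mathcal Z^{U,T^n}_*$.
\end{enumerate}
Taking the existential quantifier over $A\in\operatorname{GL}(n,\mathbb Z)$ on both sides gives exactly the claimed equivalence. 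The second statement, quantified over $A$, is by definition weak equivariant unitary bordism, since every automorphism of $T^n$ has the form $\Phi_A$.

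Alternatively, we can argue through Corollary \ref{cor_weak_equiv_unitary_bor}. Its condition (2) supplies a poset isomorphism $h$ with $A\Lambda_1=h^*\Lambda_2$, orientation-preserving or reversing according to $\operatorname{sign}\det A$. Exactly as in the proof of Theorem \ref{main_theorem_quasitoric}, we realize $h$ by a face-preserving diffeomorphism $P_1\to P_2$ using \cite[Corollary 5.2]{Wiemeler2013}. Then \cite[Proposition 7.3.11]{BP_torictopo} turns this diffeomorphism into the desired equivariant homeomorphism with respect to $(P_2)_A$ and $A^{-1}\Lambda_2$.

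The main obstacle is bookkeeping, not mathematics. We must make sure the identification $M(P,\Lambda)_{\Phi_A}\cong M(P_A,A^{-1}\Lambda)$ is compatible with omniorientations and with the unitary structures. In particular, the unitary structure on the twisted manifold must coincide with the one that the omniorientation of $(P_A, A^{-1}\Lambda)$ induces, so that the signs in \eqref{sign_omniori_quasi} match. This compatibility is stated before Corollary \ref{cor_weak_equiv_unitary_bor}, and we would take it as given. Once it is granted, the corollary follows formally.
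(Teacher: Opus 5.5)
Your proposal is correct and follows the route the paper intends. The paper gives no separate proof: the corollary is meant to follow from Corollary~\ref{cor_weak_equiv_unitary_bor} (Theorem~\ref{combinatorial_equiv_bordism} applied to the twisted pair $((P_2)_A, A^{-1}\Lambda_2)$, which still satisfies the injectivity condition), together with the realization step from the proof of Theorem~\ref{main_theorem_quasitoric}, and then quantifying over $A\in\operatorname{GL}(n,\mathbb Z)$, exactly as you do. The compatibility you flag can also be checked directly: $[t,p]\mapsto[\Phi_A(t),p]$ is an explicit $T^n$-equivariant homeomorphism $M((P_2)_A,A^{-1}\Lambda_2)\to M(P_2,\Lambda_2)_{\Phi_A}$ that preserves the omniorientations.
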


Davis and Januszkiewicz~\cite{DavisJanus1991} also introduced a real analogue of quasitoric manifolds, called \emph{small covers}. A small cover is a closed smooth $n$-dimensional manifold $M$ equipped with a locally standard action of $\mathbb Z_2^n$, whose orbit space is a simple $n$-polytope $P$. As in the quasitoric case, such a manifold is determined by its characteristic data. More precisely, there is a mod 2 characteristic function on $P$
\begin{align*}
	\lambda:\mathcal F(P)\longrightarrow \mathbb Z_2^n
\end{align*}
such that, when $F_{j_1} \cap \cdots \cap F_{j_n}$ is a vertex of $P$, the vectors $\lambda(F_{j_1}),\ldots,\lambda(F_{j_n})$ form a basis of $\mathbb Z_2^n$. We denote the corresponding small cover by $M(P,\lambda)$.

The equivariant bordism rigidity for quasitoric manifolds admits a real analogue for small covers. In fact, the proof is simpler in this setting. Because omniorientations do not need to be considered, the characteristic vectors have no sign ambiguity over $\mathbb Z_2$, and no fixed-point signs occur in the fixed-point polynomials. 
Consequently, under the same injectivity assumption on the characteristic functions, the equivariant unoriented bordism class already determines the $\mathbb Z_2^n$-equivariant homeomorphism type, and Theorem \ref{main_theorem_smallcover} follows.

\section{Applications of Main Theorems} \label{section_application}
In this section, we give some applications of Theorems \ref{main_theorem_toric}, \ref{main_theorem_quasitoric}, and \ref{main_theorem_smallcover}. We first apply these results to classes of manifolds for which the injectivity condition holds automatically. We then discuss complete invariants arising from equivariant bordism theory.

\subsection{Products of simplices}
A polytope is called \emph{$k$-neighborly} if any set of its $k$ or fewer vertices spans a face. Denote by $P^*$ the dual polytope of a given polytope $P$.

If $P_1^*$ is $k_1$-neighborly and $P_2^*$ is $k_2$-neighborly, then $(P_1\times P_2)^*$ is $\min(k_1, k_2)$-neighborly.
If $P^*$ is 2-neighborly, any 2 vertices are adjacent, and then the intersection of any two facets of $P$ is nonempty. In other words, if $P^*$ is a simplicial 2-neighborly polytope, every characteristic function on $P$ is injective. In particular, it holds for $P = \triangle^{n_1}\times \dots\times \triangle^{n_l}$ with $n_i\ge 2$.

\begin{proposition} \label{prop_2neighborly}
	Let $P^*$ be a simplicial 2-neighborly polytope. 
	\begin{enumerate}
		\item Suppose that $\Lambda_1$ and $\Lambda_2$ are characteristic matrices on $P$. Then $M(P, \Lambda_1)$ and $M(P, \Lambda_2)$ are equivariantly unitary bordant if and only if there exists a omniorientation-preserving equivariant homeomorphism between them.
		
		\item Suppose that $\lambda_1$ and $\lambda_2$ be mod 2 characteristic functions on $P$. Then $M(P, \lambda_1)$ and $M(P, \lambda_2)$ are equivariantly bordant if and only if they are equivariantly homeomorphic. 
	\end{enumerate}
\end{proposition}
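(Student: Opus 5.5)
The plan is to reduce both parts to the main rigidity theorems by checking that the injectivity condition holds automatically for every characteristic matrix (resp. mod 2 characteristic function) on $P$. Once injectivity is established, part (1) is exactly Theorem \ref{main_theorem_quasitoric} applied with $P_1 = P_2 = P$ (with the same orientation $o_P$), and part (2) is exactly Theorem \ref{main_theorem_smallcover}.

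The key step is the combinatorial observation already noted before the statement. Since $P^*$ is simplicial and $2$-neighborly, any two vertices of $P^*$ span an edge of $P^*$. Under duality, vertices of $P^*$ correspond to facets of $P$ and edges of $P^*$ correspond to codimension-two faces of $P$. Hence for any two distinct facets $F_j, F_k$ of $P$, the intersection $F_j\cap F_k$ is a nonempty face of codimension two. Because $P$ is simple, this face contains a vertex $v$, so $F_j$ and $F_k$ both appear among the $n$ facets meeting at $v$.

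By the nonsingularity condition \eqref{nonsingular_condition}, the columns $\lambda_j,\lambda_k$ are then part of a basis of $\mathbb Z^n$ (the columns of $\Lambda_v$), and in particular $\lambda_j\neq\lambda_k$. In the mod 2 case, $\lambda(F_j),\lambda(F_k)$ are part of a basis of $\mathbb Z_2^n$, so again $\lambda(F_j)\ne\lambda(F_k)$. Thus every characteristic matrix on $P$ satisfies the injectivity condition, and every mod 2 characteristic function on $P$ is injective.

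With this in hand, both directions of (1) and (2) follow directly from the cited theorems; the trivial directions (homeomorphism implies bordism) hold in any case. The only point needing care is the duality step, namely that $2$-neighborliness of $P^*$ forces pairwise intersecting facets of $P$ with a common vertex. This is standard, and I expect no real obstacle.
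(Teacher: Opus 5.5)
Your proposal is correct and follows the paper's own argument. $2$-neighborliness of $P^*$ makes any two facets of $P$ intersect, hence share a vertex. The nonsingularity condition \eqref{nonsingular_condition}, or its mod $2$ analogue, then forces the characteristic data to be injective, and both parts follow directly from Theorems \ref{main_theorem_quasitoric} and \ref{main_theorem_smallcover}. You spell out the duality and nonsingularity steps that the paper leaves implicit. Like the paper, you tacitly use $n\ge 2$, so that an edge of $P^*$ is a proper face, dual to a nonempty codimension-two face of $P$.
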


This result is a direct corollary of Theorems \ref{main_theorem_quasitoric}, and \ref{main_theorem_smallcover}, and can be applied to the case when $P$ is a product of simplices $\triangle^{n_1}\times \dots\times \triangle^{n_l}$ with $n_i\ge 2$. Next we consider the case when the product contains one-dimensional simplices.

\begin{lemma} \label{lemma_product_with_1dim}
	Suppose that $P = \triangle^{n_1}\times \dots\times \triangle^{n_l}$ with $n_i\ge 2$, $n = n_1 + \dots + n_l$, and $I_j$ is a copy of the 1-simplex, $1\le j\le m$. A characteristic matrix $\Lambda$ of $P\times \prod_{j = 1}^m I_j$ satisfies the injectivity condition if and only if $M(P\times \prod_{i = j}^m I_j, \Lambda)$ does not equivariantly unitary bound.
	Similarly, mod 2 characteristic function $\lambda$ of $P\times \prod_{j = 1}^m I_j$ is injective if and only if $M(P\times \prod_{i = j}^m I_j, \lambda)$ does not equivariantly bound.
\end{lemma}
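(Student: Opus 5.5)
The plan is to first pin down exactly which pairs of facets of $Q \coloneq P\times\prod_{j=1}^m I_j$ can carry equal characteristic vectors, and then prove the two directions separately. For the direction ``not injective $\Rightarrow$ bounds'' I pair up the vertices and show that their fixed-point terms cancel. For the direction ``injective $\Rightarrow$ does not bound'' I use Lemma \ref{inj_imply_dif_tangent} together with the injectivity of $\varphi^U$ (and of its real analogue).

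\textbf{Step 1: which facet pairs can share a vector.} The facets of $Q$ are of two kinds:
\begin{itemize}
\item facets $F\times\prod_j I_j$, where $F$ is a facet of $P$;
\item the facets $P\times I_1\times\cdots\times\{0\}\times\cdots\times I_m$ and $P\times I_1\times\cdots\times\{1\}\times\cdots\times I_m$ for each $j$, denoted $G_j^0$ and $G_j^1$.
\end{itemize}
Since $P^*$ is simplicial and 2-neighborly, any two facets of $P$ meet. A facet of $P$ also meets every $G_j^\epsilon$, and $G_j^\epsilon$ meets $G_k^{\epsilon'}$ for $j\neq k$. So the only disjoint pairs of facets are $\{G_j^0,G_j^1\}$.

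If two facets meet, they lie in a common vertex. By the nonsingularity condition \eqref{nonsingular_condition}, their characteristic vectors are part of a basis of $\mathbb Z^n$ (resp.\ $\mathbb Z_2^n$), and hence they are distinct. Therefore $\Lambda$ (resp.\ $\lambda$) fails the injectivity condition if and only if $\lambda(G_j^0)=\lambda(G_j^1)$ for some $j$.

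\textbf{Step 2: non-injective implies bounding.} Suppose $\lambda(G_j^0)=\lambda(G_j^1)$ for some $j$. Let $\iota$ be the fixed-point-free involution on $\mathcal V(Q)$ that swaps the $j$-th interval coordinate of a vertex between $0$ and $1$. A vertex $v$ and $\iota(v)$ lie on the same facets, except that $G_j^0$ is replaced by $G_j^1$. Hence, listing the facets in corresponding order, $\Lambda_v=\Lambda_{\iota(v)}$, and so $T(v,\Lambda)=T(\iota(v),\Lambda)$.

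For the signs I apply Lemma \ref{sign_omni_quasi}. The determinant factors agree. The inward normals agree except that the normal to $G_j^0$ is $e$ and the normal to $G_j^1$ is $-e$, where $e$ is the direction of $I_j$. Therefore the $\operatorname{sign}_{o_Q}$ factors differ, giving $\sigma(\iota(v))=-\sigma(v)$. The terms of \eqref{explicit_formula} then cancel in pairs, so $\varphi^U([M])=0$, and $M$ bounds by injectivity of $\varphi^U$.

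In the mod 2 case the paired terms are equal, so they cancel in characteristic $2$. Stong's theorem \cite{Stong1970} (the tangent representations at the fixed points determine the equivariant unoriented bordism class) then gives that $M$ bounds.

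\textbf{Step 3: injective implies not bounding.} Suppose $\Lambda$ is injective. By Lemma \ref{inj_imply_dif_tangent}, the monomials in \eqref{explicit_formula} are pairwise distinct. Each occurs with coefficient $\pm1$, and $\mathcal V(Q)\neq\varnothing$, so $\varphi^U([M])\neq 0$ and $M$ does not bound. The same argument works mod 2: the coefficients are $1\in\mathbb Z_2$ and the monomials are distinct, so the fixed-point image is nonzero. Here I use that the real tangent representations $\tau_\alpha$ are distinguished by $\alpha\in\mathbb Z_2^n$.

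\textbf{Main obstacle.} The delicate point is the sign computation in Step 2: one must check carefully that swapping $G_j^0$ for $G_j^1$ flips exactly one inward normal while keeping the ordering consistent with that used in the determinant. A secondary point is to make sure the mod 2 version of ``injectivity of the fixed-point map'' is correctly quoted from Stong.
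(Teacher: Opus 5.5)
Your proposal is correct and follows essentially the same route as the paper: since all facets pairwise meet except the pairs $G_j^0,G_j^1$, non-injectivity reduces to $\lambda(G_j^0)=\lambda(G_j^1)$ for some $j$; the paired vertices then carry equal tangent data and opposite signs and cancel; and the converse (which the paper calls ``clear'') follows from the distinct monomials given by Lemma \ref{inj_imply_dif_tangent}. Your explicit sign check via the flipped inward normal, and your appeal to Stong in the mod $2$ case, supply details the paper leaves implicit (one minor slip: the characteristic vectors lie in $\mathbb{Z}^{n+m}$, resp.\ $\mathbb{Z}_2^{n+m}$, not $\mathbb{Z}^n$).
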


\begin{proof}
	We only prove the unitary case, as it is parallel to the unoriented case.
	Let $F_1, \dots, F_{n+l}$ be all facets of $P$, and $v_{j, 0}$ and $v_{j, 1}$ be two vertices of $I_j$. So all facets of $P\times \prod_{j = 1}^m I_j$ are 
	\begin{align*}
		H_i &= F_i\times \prod_{j = 1}^m I_j, \\
		H_{j, \varepsilon} &= P\times I_1\times \dots\times I_{j-1}\times v_{j, \varepsilon}\times I_{j+1}\times\dots\times I_m,
	\end{align*}
	for $1\le i\le n+l$, $1\le j\le m$ and $\varepsilon = 0, 1$. 
	For any $\varepsilon_j\in \{0, 1\}$ and $1\le j\le m$, any two of the following facets
	\begin{align*}
		H_1, \dots, H_{n+l}, H_{1, \varepsilon_1}, \dots, H_{m, \varepsilon_m}
	\end{align*}
	have an nonempty intersection. 
    Denote by $\Lambda(F)$ the column of $\Lambda$ corresponding to the facet $F$.
    Thus $\Lambda(H_1), \dots, \Lambda(H_{n+l}), \Lambda(H_{1, \varepsilon_1}), \dots, \Lambda(H_{m, \varepsilon_m})$ are mutually distinct. 
	Therefore, $\Lambda$ does not satisfy the injectivity condition if and only if there exists an integer $1\le j_0\le m$ such that $\Lambda(H_{j_0, 0}) = \Lambda(H_{j_0, 1})$. 
	Hence, two fixed points
	\begin{align*}
		H_{i_1}\cap \dots\cap H_{i_n}\cap \cap_{j\ne j_0} H_{j, \varepsilon_j} \cap H_{j_0, 0}, \quad \text{ and } \quad H_{i_1}\cap \dots\cap H_{i_n}\cap \cap_{j\ne j_0} H_{j, \varepsilon_j} \cap H_{j_0, 1}
	\end{align*}
	have the same tangent representation and opposite signs,
	for any $H_{i_1}\cap \dots\cap H_{i_n}\ne \varnothing$ and $\varepsilon_j\in \{0, 1\}, j\ne j_0$. Since 
	\begin{align*}
		\mathcal{V}\left(P\times \prod_{i = j}^m I_j\right) = \{H_{i_1}\cap \dots\cap H_{i_n}\cap \cap_{j = 1}^m H_{j, \varepsilon_j}\mid H_{i_1}\cap \dots\cap H_{i_n}\ne \varnothing, \varepsilon_j\in \{0, 1\}\},
	\end{align*}
	the monomials in the polynomial $\varphi^U([M(P\times \prod_{i = 1}^m I_i, \Lambda)])$ cancel pairwise, and so $M(P\times \prod_{i = 1}^m I_i, \Lambda)$ equivariantly unitary bounds.
	The converse is clear.
\end{proof}

Thus, in this setting, whether the corresponding quasitoric manifold or small cover equivariantly bounds is completely determined by the injectivity of its characteristic data.
In view of Theorems \ref{main_theorem_quasitoric}, \ref{main_theorem_smallcover} and Lemma \ref{lemma_product_with_1dim}, Propostion \ref{prop_2neighborly} can be reformulated for products of simplices as follows.

\begin{proposition}
    Let $P = \triangle^{n_1}\times \dots\times \triangle^{n_l}$ be a product of oriented simplices. Let $M_1$ and $M_2$ be omnioriented quasitoric manifolds (resp. small covers) over $P$. Assume that they do not $T^n$-equivariantly unitary bound (resp. $\mathbb{Z}_2^n$-equivariantly unoriented bound). Then they are equivariantly unitary bordant (resp. equivariantly unoriented bordant) if and only if there exists an omniorientation-preserving equivariant homeomorphism between them (resp. they are equivariantly homeomorphic).
\end{proposition}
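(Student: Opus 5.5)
The plan is to combine Lemma \ref{lemma_product_with_1dim} with Theorems \ref{main_theorem_quasitoric} and \ref{main_theorem_smallcover}. The first step is to write the given product in the normal form used by that lemma. After reordering factors, write $P = P'\times\prod_{j=1}^m I_j$, where $P'=\triangle^{n_1}\times\dots\times\triangle^{n_k}$ is the product of all simplices of dimension at least $2$ and the $I_j$ are the one-dimensional factors. Reordering factors is a combinatorial isomorphism of polytopes. I will have to check that it can be arranged to preserve the chosen orientation, or else absorb the possible sign change into the orientation of $P$. Since the statement treats $P$ as a fixed oriented polytope, I would simply transport the orientation along the reordering. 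The degenerate cases $k=0$ (a cube) and $m=0$ (Proposition \ref{prop_2neighborly}) should also be covered. When $m=0$ the claim is exactly Proposition \ref{prop_2neighborly}. When $k=0$ the proof of Lemma \ref{lemma_product_with_1dim} goes through verbatim: the facets $H_{j,\varepsilon_j}$, one from each pair, still pairwise intersect.

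The second step is to show that both characteristic matrices satisfy the injectivity condition. Write $M_i = M(P,\Lambda_i)$ (resp.\ $M(P,\lambda_i)$). By hypothesis $M_i$ does not equivariantly bound. The ``if'' direction of Lemma \ref{lemma_product_with_1dim} then says that $\Lambda_i$ (resp.\ $\lambda_i$) satisfies the injectivity condition. I need to extract this direction carefully from the lemma's proof. That proof shows that a repeated column must come from an opposite pair $H_{j_0,0},H_{j_0,1}$. Such a repetition forces the fixed points to cancel in pairs in $\varphi^U$, so $M_i$ bounds. The unoriented version is the same argument with $\mathbb{Z}_2$-coefficients, where the paired terms cancel mod $2$.

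The final step is to invoke the rigidity theorems. Both $\Lambda_1$ and $\Lambda_2$ now satisfy injectivity over the same oriented $P$. Theorem \ref{main_theorem_quasitoric} then gives that $M_1$ and $M_2$ are equivariantly unitary bordant if and only if there is an omniorientation-preserving $T^n$-equivariant homeomorphism between them. In the real case, Theorem \ref{main_theorem_smallcover} gives that they are equivariantly unoriented bordant if and only if they are $\mathbb{Z}_2^n$-equivariantly homeomorphic.

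I expect the main obstacle to be bookkeeping rather than new mathematics. Two points need care. First, the reordering of factors and the orientation conventions must be handled consistently. Second, the unoriented analogue of Lemma \ref{lemma_product_with_1dim} is only asserted to be ``parallel'', so I would verify it briefly via Stong's fixed-point description.
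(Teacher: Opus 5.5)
Your proposal is correct and follows the paper's argument: the nonbounding hypothesis yields the injectivity condition via the nontrivial direction of Lemma \ref{lemma_product_with_1dim} (a repeated column pairs up fixed points whose terms cancel), and then Theorems \ref{main_theorem_quasitoric} and \ref{main_theorem_smallcover} apply directly. Your extra bookkeeping on reordering factors and on the cube and $m=0$ cases is harmless: injectivity and bounding do not depend on the ordering or orientation of $P$, and Theorem \ref{main_theorem_quasitoric} already allows the two copies of $P$ to carry different orientations.
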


In particular, nontrivial equivariant bordism classes represented by quasitoric manifolds or small covers over products of simplices exhibit a rigidity phenomenon: two such representatives are bordant only when they are already equivariantly homeomorphic. Thus, in the nonbounding case, equivariant bordism classification and equivariant homeomorphism classification coincide.

\begin{remark}
    The nonbounding assumption in the proposition is essential. Indeed, every small cover over a cube equivariantly bounds \cite[Corollary 6.4]{LuTan2014}. Consequently, any two small covers over cubes represent the same equivariant bordism class, although they need not be equivariantly homeomorphic.
\end{remark}

\subsection{Characteristic numbers} \label{subsection_chara}
It is well-known that two closed smooth $\mathbb{Z}_2^k$-manifolds are equivariantly bordant if and only if all their equivariant Stiefel--Whitney numbers agree \cite{tomDieck1971}. Moreover, \cite[Theorem 1.4]{CLY2026} showed that, for closed smooth $\mathbb{Z}_2^k$-manifolds with isolated fixed points, it suffices to consider the characteristic numbers determined by the top Stiefel--Whitney class. Hence, for small covers with injective characteristic functions, these characteristic numbers form a complete invariant of equivariant homeomorphism.

Guillemin, Ginzburg and Karshon showed that two unitary $T^k$-manifolds with isolated fixed points are equivariantly unitary bordant if and only if their integral equivariant cohomology Chern numbers are equal~\cite[Theorem H.4]{GGK2002}.
L\"u and Wang extended this result to general unitary $T^k$-manifolds~\cite[Theorem 1.5]{LuWang2018}.
More precisely, suppose that $M^{2n}$ is a unitary $T^k$-manifold. For $1\le i\le n$, the $i$-th equivariant cohomology Chern class of $M$ is the ordinary Chern class of the vector bundle $ET^k\times_{T^k} TM$ over $M_{T^k} = ET^k\times_{T^k} M$
\begin{align*}
	c_i^{T^k}(TM) \coloneq c_i(ET^k\times_{T^k} TM)\in H^{2i}(M_{T^k}; \mathbb{Z}) = H^{2i}_{T^k}(M; \mathbb{Z}).
\end{align*}
For each partition $\omega = (i_1, \dots, i_r)$ of $|\omega| = i_1 + \dots + i_r$, denote
\begin{align*}
	c_\omega^{T^k}(TM) = c_{i_1}^{T^k}(TM)\cdots c_{i_r}^{T^k}(TM)\in H^*_{T^k}(M; \mathbb{Z}).
\end{align*}
Let $\pi_!: H^*_{T^k}(M; \mathbb{Z}) \to H^{*-2n}(BT^k; \mathbb{Z})$ be the Gysin map induced by the map $\pi: M_{T^k}\to BT^k$. The \emph{$\omega$-th equivariant cohomology Chern numbers} of $M$ is
\begin{align*}
	c^{T^k}_\omega [M]_{T^k} \coloneq \pi_!(c_\omega^{T^k}(TM)),
\end{align*}
which is a homogeneous polynomial of degree $2|\omega| - 2n$ in $H^*(BT^k; \mathbb{Z}) = \mathbb{Z}[x_1, \dots, x_k]$ with $|x_i| = 2$.

Hence, together with Theorem \ref{main_theorem_toric}, we can describe the isomorphism of toric manifolds in terms of equivariant cohomology Chern numbers.

\begin{proposition}
	Two toric manifolds $X_1$ and $X_2$ of complex dimension $n$ are isomorphic as varieties if and only if there exists $A\in \operatorname{GL}(n, \mathbb{Z})$ such that for any partition $\omega$, 
	\begin{align*}
		c^{T^n}_\omega [X_1]_{T^n} = (B\Phi_A)^* (c^{T^n}_\omega [X_2]_{T^n}).
	\end{align*}
\end{proposition}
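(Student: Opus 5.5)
The plan is to combine Theorem \ref{main_theorem_toric} with the L\"u--Wang characterization \cite[Theorem 1.5]{LuWang2018}: two unitary $T^n$-manifolds are equivariantly unitary bordant if and only if all of their integral equivariant cohomology Chern numbers agree. Theorem \ref{main_theorem_toric} says that $X_1\cong X_2$ as varieties if and only if $X_1$ and $(X_2)_{\Phi_A}$ are $T^n$-equivariantly unitary bordant for some $A\in\operatorname{GL}(n,\mathbb Z)$. Once these are in place, the proposition reduces to a single compatibility statement:
\begin{align*}
	c^{T^n}_\omega[(X_2)_{\Phi_A}]_{T^n} = (B\Phi_A)^*\bigl(c^{T^n}_\omega[X_2]_{T^n}\bigr)\quad\text{for every partition }\omega.
\end{align*}

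I would prove this compatibility by naturality of the Borel construction. The twisted manifold $(X_2)_{\Phi_A}$ has the same underlying unitary manifold as $X_2$, with $T^n$ acting through $\Phi_A$. Let $\Psi=E\Phi_A\times_{\Phi_A}\operatorname{id}$. Then there is a commutative pullback square in which
\begin{align*}
	\Psi: ET^n\times_{T^n}(X_2)_{\Phi_A}\longrightarrow ET^n\times_{T^n}X_2
\end{align*}
covers $B\Phi_A: BT^n\to BT^n$ with fibre $X_2$. Because $\Phi_A$ is an isomorphism, $B\Phi_A$ is a homotopy equivalence, so this square is a genuine fibre-bundle pullback.

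The Borel bundle $ET^n\times_{T^n}T(X_2)_{\Phi_A}$ is the pullback under $\Psi$ of $ET^n\times_{T^n}TX_2$, and the complex structures match because the unitary structure is untouched by the twist. Hence $c^{T^n}_\omega$ of the twisted manifold equals $\Psi^*c^{T^n}_\omega(TX_2)$. Gysin maps commute with pullback along such fibre squares: $\pi'_!\Psi^*=(B\Phi_A)^*\pi_!$. Here the fibre orientation is the given one, since the underlying manifold is unchanged. This yields the displayed identity.

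As an alternative check, I would verify the same identity via the localization formula. The Chern numbers equal $\sum_p \sigma(p)\,c_\omega(\tau(p))/\prod_i w_i(p)$. Twisting replaces each weight $w$ by $A^\top w$, and this is exactly the action of $(B\Phi_A)^*$ on $H^2(BT^n)=\mathbb Z^n$.

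Given the compatibility, the proof assembles as follows. If $X_1\cong X_2$, Theorem \ref{main_theorem_toric} gives an $A$ with $X_1$ bordant to $(X_2)_{\Phi_A}$. Since Chern numbers are bordism invariants, the numbers of $X_1$ equal those of $(X_2)_{\Phi_A}$, which are $(B\Phi_A)^*$ of those of $X_2$. Conversely, the equalities say that $X_1$ and $(X_2)_{\Phi_A}$ have identical Chern numbers, so L\"u--Wang gives that they are bordant, and Theorem \ref{main_theorem_toric} gives the isomorphism.

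The main obstacle is the bookkeeping in the compatibility step. One must confirm that $B\Phi_A$ (or $B\Phi_{A^{-1}}$, depending on the convention) is the correct map, and that it acts by $A^\top$ on $H^2$, consistent with $\tau_{A^\top w}$ in the proof of Theorem \ref{main_theorem_toric}. Since $A$ ranges over all of $\operatorname{GL}(n,\mathbb Z)$, a convention mismatch only replaces $A$ by $A^{-1}$ and does not affect the existential statement.
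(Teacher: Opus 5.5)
Your proposal is correct and follows the paper's own argument. The paper likewise combines Theorem \ref{main_theorem_toric} with the L\"u--Wang criterion, and it proves the twisting identity $c_\omega^{T^n}[M_{\Phi_A}]_{T^n}=(B\Phi_A)^*(c_\omega^{T^n}[M]_{T^n})$ using exactly your pullback squares of Borel constructions, with $\widetilde{B\Phi_A}([e,x])=[E\Phi_A(e),x]$.

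Two small points. First, the square is a pullback because $\Psi$ is a bundle map over $B\Phi_A$ that restricts to the identity on each fibre $X_2$. It is not a pullback because $B\Phi_A$ is a homotopy equivalence. Second, your convention worry resolves cleanly: $\tau_w\circ\Phi_A=\tau_{A^\top w}$, so $(B\Phi_A)^*$ acts by $A^\top$ on $H^2(BT^n;\mathbb Z)$, which matches the weights used in the proof of Theorem \ref{main_theorem_toric}.
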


\begin{proof}
	One can directly show that for any unitary $T^n$-manifold $M$ and $A\in \operatorname{GL}(n, \mathbb{Z})$,
	$$
	\begin{tikzpicture}[codi]
		\obj { |(a)| (M_{\Phi_A})_{T^n} &[2em] |(b)| M_{T^n} &[6em] |(e)| ET^n\times_{T^n} TM_{\Phi_A} &[5em] |(f)| ET^n\times_{T^n} TM \\
		       |(c)| BT^n & |(d)| BT^n & |(g)| (M_{\Phi_A})_{T^n} & |(h)| M_{T^n} \\};
		\mor[swap] a "\pi_{\Phi_A}":-> c "B\Phi_A":-> d;
		\mor a "\widetilde{B\Phi_A}":-> b \pi:-> d;
		\mor[swap] e -> g "\widetilde{B\Phi_A}":-> h;
		\mor e -> f -> h;
	\end{tikzpicture}
	$$
	are pullback diagrams, where $\widetilde{B\Phi_A}([e, x]) = [E\Phi_A(e), x]$ for $e\in ET^n$ and $x\in M_{\Phi_A}$. Thus the relation between the equivariant cohomology Chern numbers of $M$ and $M_{\Phi_A}$ is
	\begin{align} \label{Chern_number_twist}
		c_\omega^{T^n}[M_{\Phi_A}]_{T^n} = (B\Phi_A)^* (c^{T^n}_\omega [M]_{T^n}).
	\end{align}
\end{proof}

Analogously to the discussion of $\mathbb{Z}_2^k$-manifolds, the complete invariants for unitary $T^k$-manifolds can also be simplified, and it suffices to consider only a certain subset of the equivariant cohomology Chern numbers; see \cite{LuTan2011, CLY2026} for further details.

\subsection{Discussions on Hirzebruch surfaces} \label{subsection_Hirzebruch_surface}
A Hirzebruch surface $\mathscr{H}_a$ is the $\mathbb{C}P^1$-bundle over $\mathbb{C}P^1$ determined by the sheaf $\mathcal{O}\oplus \mathcal{O}(-a)$ for $a\in \mathbb{Z}$ (see \cite[Chapter V]{Hartshorne_AG} for further details). 
The Hirzebruch surface $\mathscr{H}_a$ admits an action of the algebraic torus $(\mathbb{C}^\times)^2$, making it a projective toric manifold. The rays of its associated fan $\Sigma_a$ are generated by
\begin{align} \label{fan_Hirzebruch_surface}
    \begin{pmatrix}
        1\\0
    \end{pmatrix}, 
    \begin{pmatrix}
        0\\1
    \end{pmatrix},
    \begin{pmatrix}
        -1\\a
    \end{pmatrix},
    \begin{pmatrix}
        0\\-1
    \end{pmatrix}.
\end{align}
(see \cite[Example 3.1.16]{CLS2011}). 
The classification of Hirzebruch surfaces under several natural equivalence relations is well-understood.

\begin{theorem} \label{class_Hirzebruch_surface}
    Let $\mathscr{H}_a$ and $\mathscr{H}_b$, $a, b\in \mathbb{Z}$, be two Hirzebruch surfaces.
    \begin{enumerate}
        \item (\cite{Hirzebruch}) $\mathscr{H}_a$ and $\mathscr{H}_b$ are isomorphic as varieties if and only if $|a| = |b|$.
        \item (\cite{BP_torictopo}) $\mathscr{H}_a$ and $\mathscr{H}_b$ are $T^2$-equivariantly homeomorphic if and only if $a = b$. Moreover, $\mathscr{H}_a$ and $\mathscr{H}_b$ are weakly $T^2$-equivariantly homeomorphic if and only if $|a| = |b|$.
        \item (\cite{BP_torictopo}) There exists a $T^2$-equivariant homeomorphism (resp. weakly $T^2$-equivariant homeomorphism) between $\mathscr{H}_a$ and $\mathscr{H}_b$ preserving omniorientations if and only if $a = b$ (resp. $|a| = |b|$).
        \item (\cite{Sarkar2012}) Every Hirzebruch surface is a $T^2$-equivariant oriented boundary.
    \end{enumerate}
\end{theorem}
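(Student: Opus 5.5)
The statement just given, Theorem \ref{class_Hirzebruch_surface}, collects four known classification results, so the plan is to reduce each item to the fan/characteristic-matrix descriptions recalled earlier rather than to reprove them from scratch. The common input is the fan $\Sigma_a$ in \eqref{fan_Hirzebruch_surface}. Regarded as a quasitoric manifold over the square with its complex omniorientation, $\mathscr{H}_a$ has characteristic matrix
\begin{align*}
\Lambda_a=\begin{pmatrix} 1&0&-1&0\\ 0&1&a&-1\end{pmatrix},
\end{align*}
where the facets are ordered cyclically, so that the pairs $\{1,3\}$ and $\{2,4\}$ are opposite.

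For (1) we use Lemma \ref{isom_variety_fan}. The reflection $\operatorname{diag}(1,-1)$, followed by relabelling, sends $\Sigma_a$ to $\Sigma_{-a}$, so $|a|=|b|$ suffices. For the converse, suppose $A\in\operatorname{GL}(2,\mathbb Z)$ maps $\Sigma_a$ onto $\Sigma_b$. The rays of $\Sigma_a$ contain exactly one pair of opposite vectors when $a\neq 0$, namely $\pm e_2$ (two pairs when $a=0$), and $A$ must preserve such pairs. If $|a|\neq 0$, then $A$ sends $\{\pm e_2\}$ to $\{\pm e_2\}$, hence $A=\begin{pmatrix}\epsilon&0\\ c&\delta\end{pmatrix}$. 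Requiring $A e_1$ and $A(-e_1+ae_2)$ to be the remaining two rays then forces $b=\pm a$. The case $a=0$ is handled separately, since only $\Sigma_0$ has two antipodal pairs.

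For (2) and (3) we use the Davis--Januszkiewicz classification (\cite[Proposition 7.3.11]{BP_torictopo}): equivariant homeomorphisms correspond to combinatorial automorphisms $h$ of the square with $\lambda$ preserved up to sign (respectively with exact signs and orientation preserved, in the omnioriented case); the weak versions additionally allow $A\in\operatorname{GL}(2,\mathbb Z)$ acting on $\Lambda$. So the task is to check which dihedral symmetries $h$ of the square, combined with signs (and with $A$ in the weak case), carry $\Lambda_a$ to $\Lambda_b$. Without $A$, the column paired with its negative across an opposite pair must be matched. The columns $\pm e_1,\pm e_2$ appear with $(-1,a)$ as the only nontrivial entry, and a short case check over the $8$ symmetries gives $a=b$. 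With $A$, the previous computation gives $|a|=|b|$, and $A=\operatorname{diag}(1,-1)$ realizes it. Alternatively, (3) follows from Corollary \ref{cor_weak_equiv_unitary_bor} and the analysis of (2), restricted to signed matchings.

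Item (4) is Sarkar's theorem. Within the present framework one can still indicate a reason: the square is $I\times I$, and the oriented analogue of the cancellation argument of Lemma \ref{lemma_product_with_1dim} applies since opposite facets carry columns $\pm$ equal up to sign, so the tangent weights at paired vertices agree up to sign. Full oriented bounding, however, needs \cite{Sarkar2012}'s explicit construction, and we simply cite it. I expect the main obstacle to be the case analysis in (2), especially $a=0$ and $|a|=1$, where extra symmetries appear and the sign conventions must be tracked carefully.
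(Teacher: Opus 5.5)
Your proposal is correct, but it argues differently from the paper. The paper does not prove this theorem itself: all four parts are cited. Afterwards, Parts (1) and (3) are re-derived by combining Theorem~\ref{main_theorem_toric}, Theorem~\ref{main_theorem_quasitoric} and Corollary~\ref{cor_weak} with Propositions~\ref{Hirzebruch_surface_unibor} and~\ref{Hirzebruch_surface_weak}; the rigidity results apply because the columns of $\Lambda_a$ are distinct. There, a single ABBV computation, $c^{T^2}_{(1,1,1)}[\mathscr{H}_a]_{T^2}=-2a^2x_1-4ax_2$, recovers $a$. Its content $2|a|\gcd(|a|,2)$ is unchanged by $(B\Phi_A)^*$ and recovers $|a|$. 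This avoids any case analysis over $\operatorname{GL}(2,\mathbb Z)$ or over the symmetries of the square, and the converse uses the same $A=\operatorname{diag}(1,-1)$ as you.

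You instead work directly with Lemma~\ref{isom_variety_fan} and \cite[Proposition 7.3.11]{BP_torictopo}. This is more elementary and needs none of the bordism machinery (Hamrick--Ossa injectivity, L\"u--Wang, Wiemeler's smoothing). It also gives Part (2), which the paper only cites. In effect you do by hand, for this example, the matching of fixed-point data that the rigidity theorems do in general; the paper's point is that one characteristic number already suffices.

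Three small points to tighten:
\begin{enumerate}
\item In the weak case of (2), the computation from (1) has to be redone with signs allowed. For $a\neq0$, the antipodal pair of rays is replaced by the opposite facets $F_2,F_4$, which share the circle $\langle e_2\rangle$. After that, the same argument gives $b=\pm a$.
\item In the weak case of (3), check that the permutation forced by $A\Lambda_a=h^*\Lambda_{-a}$, which swaps $F_2$ and $F_4$, is orientation-reversing. This is required when $\det A<0$, because twisting by $\Phi_A$ reverses the orientation of $P$ (see the discussion preceding Corollary~\ref{cor_weak_equiv_unitary_bor}). It is indeed orientation-reversing.
\item In your heuristic for (4), for $a\neq0$ only $F_2,F_4$ (not $F_1,F_3$) carry columns equal up to sign. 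One such pair suffices for the cancellation. However, as you note, cancellation of oriented fixed-point data does not by itself give an oriented null-bordism, so citing \cite{Sarkar2012} is genuinely needed.
\end{enumerate}
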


Part (4) shows that equivariant oriented bordism cannot distinguish Hirzebruch surfaces.
In the following, we use equivariant cohomology Chern numbers to recover the classification results in Parts (1) and (3) in a unified manner.

Write $H^*(BT^2; \mathbb{Z}) = \mathbb{Z}[x_1, x_2]$, $|x_1|=|x_2| = 2$. 
From \eqref{fan_Hirzebruch_surface}, the four tangent weight pairs of $\mathscr{H}_a$ are 
\begin{align*}
    (x_1,\ x_2), \quad (-x_1,\ a x_1+x_2), \quad (-x_1,\ -a x_1-x_2), \quad (x_1,\ -x_2).
\end{align*}
Then by the Atiyah--Bott--Berline--Vergne (ABBV) localization formula \cite{AtiyahBott1984, BerlineVergne}, 
\begin{align*}
    c_{(1,1,1)}^{T^2}[\mathscr{H}_a]_{T^2} &= \frac{(x_1+x_2)^3}{x_1x_2} + \frac{(-x_1+a x_1+x_2)^3}{-x_1(a x_1+x_2)} + \frac{(-x_1-a x_1-x_2)^3}{-x_1(-a x_1-x_2)} + \frac{(x_1-x_2)^3}{-x_1x_2} \\
    &= -2a^2x_1 - 4ax_2.
\end{align*}

\begin{proposition} \label{Hirzebruch_surface_unibor}
    Two Hirzebruch surfaces $\mathscr{H}_a$ and $\mathscr{H}_b$, $a, b\in \mathbb{Z}$, are $T^2$-equivariantly unitary bordant if and only if $a = b$.
\end{proposition}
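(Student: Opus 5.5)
The plan is to use the Chern number $c^{T^2}_{(1,1,1)}[\mathscr{H}_a]_{T^2} = -2a^2x_1 - 4ax_2$ computed above via ABBV, together with the characterization of equivariant unitary bordism by integral equivariant cohomology Chern numbers \cite[Theorem H.4]{GGK2002}, \cite[Theorem 1.5]{LuWang2018}.

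For the direction ``$a=b$ implies bordant'' there is nothing to prove. Here $\mathscr{H}_a$ and $\mathscr{H}_b$ carry the same fan $\Sigma_a$, so they are the same $T^2$-equivariant unitary manifold.

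For the converse, suppose $\mathscr{H}_a$ and $\mathscr{H}_b$ are $T^2$-equivariantly unitary bordant. Then all their equivariant cohomology Chern numbers coincide; this is the easy direction, since these numbers are equivariant bordism invariants, and it is also immediate from the localization formula, which expresses them through $\varphi^U$. In particular
\begin{align*}
    -2a^2x_1 - 4ax_2 = -2b^2x_1 - 4bx_2 \in \mathbb{Z}[x_1,x_2].
\end{align*}
Comparing the coefficients of $x_2$ gives $-4a=-4b$, hence $a=b$.

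Alternatively, one can argue directly with $\varphi^U$ and avoid Chern numbers altogether. All four fixed points have sign $+1$. The rays in \eqref{fan_Hirzebruch_surface} are pairwise distinct, so the argument in the proof of Theorem \ref{main_theorem_toric}, with $A=I_2$, gives the following: equality of the fixed-point polynomials forces the two sets of primitive ray generators to coincide. This means
\begin{align*}
    \{(1,0)^\top,(0,1)^\top,(-1,a)^\top,(0,-1)^\top\}=\{(1,0)^\top,(0,1)^\top,(-1,b)^\top,(0,-1)^\top\},
\end{align*}
so again $a=b$.

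The only step needing care is the ABBV computation of $-2a^2x_1-4ax_2$, which I take as given from the paper. I should also check that the sign conventions for the tangent weights, which are the columns of $W_\sigma$, are applied consistently for both $a$ and $b$. Since both surfaces are computed by the same formula, any convention error would affect them identically. The linear term $-4ax_2$ then separates $a$ from $-a$, unlike the non-equivariant or weakly equivariant invariants, which only see $|a|$.
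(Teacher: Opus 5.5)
Your main argument is the paper's: equivariant unitary bordism forces equality of all integral equivariant cohomology Chern numbers, and comparing the $x_2$-coefficients of $c^{T^2}_{(1,1,1)}[\mathscr{H}_a]_{T^2}=-2a^2x_1-4ax_2$ gives $a=b$, while the converse is trivial. Your alternative argument via $\varphi^U$ also works; it follows the proof of Theorem~\ref{main_theorem_toric} with $A=I_2$, which forces the ray sets to coincide, and it has the advantage of not depending on the ABBV computation.
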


\begin{proof}
    If $\mathscr{H}_a$ and $\mathscr{H}_b$ are $T^2$-equivariantly unitary bordant, all their integral equivariant cohomology Chern numbers agree. Then $-2a^2x_1 - 4ax_2 = -2b^2x_1 - 4bx_2\in H^2(BT^2; \mathbb{Z})$, and so $a = b$. The converse is obvious.
\end{proof}

For $L = r_1x_1 + r_2x_2\in H^2(BT^2; \mathbb{Z})$, denote $d_L = \operatorname{gcd}(r_1, r_2)$. Since for $A\in \operatorname{GL}(2, \mathbb{Z})$, $(B\Phi_A)^*$ is a unimodular transformation on $H^2(BT^2; \mathbb{Z})$, $d_{(B\Phi_A)^*(L)} = d_L$.

\begin{proposition} \label{Hirzebruch_surface_weak}
    Two Hirzebruch surfaces $\mathscr{H}_a$ and $\mathscr{H}_b$, $a, b\in \mathbb{Z}$, are weakly $T^2$-equivariantly unitary bordant if and only if $|a| = |b|$.
\end{proposition}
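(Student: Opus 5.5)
The plan is to prove the two directions separately: the forward direction by an explicit $\operatorname{GL}(2,\mathbb Z)$ twist, and the converse by using the gcd invariant $d_L$ of the Chern number $c_{(1,1,1)}^{T^2}$.

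For the forward direction, assume $|a| = |b|$. If $a = b$ there is nothing to prove, so suppose $b = -a$. Take $A = \operatorname{diag}(1,-1)\in\operatorname{GL}(2,\mathbb Z)$. It sends the rays of $\Sigma_{-a}$, namely $(1,0)^\top,(0,1)^\top,(-1,-a)^\top,(0,-1)^\top$, to $(1,0)^\top,(0,-1)^\top,(-1,a)^\top,(0,1)^\top$. These are exactly the rays of $\Sigma_a$, and $A$ maps cones to cones. By Lemma \ref{isom_variety_fan}, $\mathscr H_a$ and $\mathscr H_{-a}$ are weakly equivariantly isomorphic, so $\mathscr H_a$ is equivariantly biholomorphic to $(\mathscr H_{-a})_{\Phi}$ for a suitable automorphism $\Phi$. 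This gives weak equivariant unitary bordism, exactly as in the first step of the proof of Theorem \ref{main_theorem_toric}. (Equivalently, one can cite Theorem \ref{class_Hirzebruch_surface}(1) together with Theorem \ref{main_theorem_toric}; the point of the proposition, however, is to recover this via characteristic numbers.)

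For the converse, assume $\mathscr H_a$ and $(\mathscr H_b)_{\Phi_A}$ are equivariantly unitary bordant for some $A\in\operatorname{GL}(2,\mathbb Z)$. By \cite[Theorem 1.5]{LuWang2018} all integral equivariant Chern numbers agree. Combined with \eqref{Chern_number_twist}, this gives
\begin{align*}
-2a^2x_1-4ax_2 = (B\Phi_A)^*\bigl(-2b^2x_1-4bx_2\bigr).
\end{align*}
Since $(B\Phi_A)^*$ acts unimodularly on $H^2(BT^2;\mathbb Z)$, the remark preceding the proposition gives equal gcd invariants:
\begin{align*}
\gcd(2a^2,4a) = \gcd(2b^2,4b).
\end{align*}
Now compute $\gcd(2a^2,4a) = 2|a|\gcd(a,2)$. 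This equals $2|a|$ for $a$ odd and $4|a|$ for $a$ even (and $0$ for $a=0$).

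The main obstacle is that this gcd invariant alone does not separate all $|a|$. For example, $a=2$ gives $8$, while $b=4$ would give $16$, and $a=1$ gives $2$; so one must check whether an odd $|a|$ can collide with an even $|b|$. Equality $2|a| = 4|b|$ with $a$ odd and $b$ even forces $|a| = 2|b|$, which is even, a contradiction. The odd–odd and even–even cases give $|a| = |b|$ directly, and the case $0$ is isolated. So in fact the gcd invariant suffices.

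As a backup, in case I have miscomputed the parity bookkeeping, I would compare a second Chern number instead, e.g. $c_{(2,1)}$ or $c_{(1,1,1,1)}$ in degree $4$, evaluated by the ABBV formula. I would then use $\operatorname{GL}(2,\mathbb Z)$-invariants of binary forms, such as the content or the discriminant, to extract $|a|$.
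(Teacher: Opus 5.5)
Your proof is correct. The implication from weak bordism to $|a|=|b|$ is the paper's argument: you compare $c_{(1,1,1)}^{T^2}$ up to $(B\Phi_A)^*$ via \eqref{Chern_number_twist}, use invariance of the content $d_L$, and compare parities in $2|a|\gcd(|a|,2)$. Your case analysis simply spells out the paper's one-line ``considering the parities of $a$ and $b$''. Your claim that the gcd invariant ``does not separate all $|a|$'' is unfounded, since your own examples give distinct values and your next lines show it does separate them. The backup plan is therefore unnecessary.

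You genuinely differ on the implication $|a|=|b|\Rightarrow$ weakly bordant. You apply $A=\operatorname{diag}(1,-1)$ to the fan, check that it carries the rays and cones of $\Sigma_{-a}$ onto those of $\Sigma_a$, and invoke Lemma~\ref{isom_variety_fan} to get a weakly equivariant biholomorphism, hence a weak bordism. The paper uses the same $A$ on the dual side. It applies $(B\Phi_A)^*$ to the four tangent weight pairs of $\mathscr H_{-a}$, observes that they become the weight pairs of $\mathscr H_a$, and concludes from ABBV localization and the Chern-number criterion \cite{LuWang2018} that all twisted Chern numbers agree.

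Your route gives a stronger conclusion, an actual weak equivariant isomorphism rather than just a bordism, and needs no localization. The paper's route keeps both directions inside fixed-point data and equivariant Chern numbers, which is the stated purpose of the subsection. Your fan argument does not make the intended reproof of Theorem~\ref{class_Hirzebruch_surface}(1) circular, because it only establishes the easy half of that statement directly. Your parenthetical alternative of citing Theorem~\ref{class_Hirzebruch_surface}(1) would be circular for that purpose.
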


\begin{proof}
    If $\mathscr{H}_a$ and $\mathscr{H}_b$ are weakly $T^2$-equivariantly unitary bordant, by \eqref{Chern_number_twist}, there exists $A\in \operatorname{GL}(2, \mathbb{Z})$ such that $c_{(1, 1, 1)}^{T^2}[\mathscr{H}_a]_{T^2} = (B\Phi_A)^* (c^{T^2}_{(1, 1, 1)} [\mathscr{H}_b]_{T^2})$. Hence, 
    \begin{align*}
        2|a|\operatorname{gcd}(|a|, 2) = d_{c_{(1, 1, 1)}^{T^2}[\mathscr{H}_a]_{T^2}} = d_{c^{T^2}_{(1, 1, 1)} [\mathscr{H}_b]_{T^2}} = 2|b|\operatorname{gcd}(|b|, 2).
    \end{align*}
    Considering the parities of $a$ and $b$, we obtain $|a| = |b|$.

    Conversely, assume $|a| = |b|$. The case $a = b$ is immediate. For $b = -a$, take
    \begin{align*}
        A=\begin{pmatrix}1&0\\0&-1\end{pmatrix}\in \operatorname{GL}(2, \mathbb{Z}),
    \end{align*}
    so that $(B\Phi_A)^*(x_1) = x_1, (B\Phi_A)^*(x_2) = -x_2$.
    The four tangent weight pairs of $\mathscr{H}_{-a}$ are $(x_1, x_2)$, $(-x_1, -a x_1+x_2)$, $(-x_1, a x_1-x_2)$, $(x_1, -x_2)$. Applying $(B\Phi_A)^*$ yields $(x_1, -x_2)$, $(-x_1, -a x_1-x_2)$, $(-x_1, a x_1 + x_2)$, $(x_1, x_2)$, which consist of the fixed-point weight data of $\mathscr{H}_a$.
    Then the ABBV localization formula implies that 
    \begin{align*}
        c_{\omega}^{T^2}[\mathscr{H}_a]_{T^2} = (B\Phi_A)^*(c_{\omega}^{T^2}[\mathscr{H}_{-a}]_{T^2})
    \end{align*}
    for any partition $\omega$. Therefore, $\mathscr{H}_a$ and $\mathscr{H}_{-a}$ are weakly equivariantly unitary bordant.
\end{proof}

Combining Theorems \ref{main_theorem_toric}, \ref{main_theorem_quasitoric}  and Corollary \ref{cor_weak} with Propositions \ref{Hirzebruch_surface_unibor} and \ref{Hirzebruch_surface_weak}, we obtain a new proof of Parts (1) and (3) of Theorem \ref{class_Hirzebruch_surface} using equivariant unitary bordism and equivariant Chern numbers.


\begin{thebibliography}{CMS10b}
	
	\bibitem[AB84]{AtiyahBott1984}
	Michael~F. Atiyah and Raoul~H. Bott.
	\newblock The moment map and equivariant cohomology.
	\newblock {\em Topology}, 23(1):1--28, 1984.
	
	\bibitem[BP15]{BP_torictopo}
	Victor~M. Buchstaber and Taras~E. Panov.
	\newblock {\em Toric topology}, volume 204 of {\em Mathematical Surveys and
		Monographs}.
	\newblock American Mathematical Society, Providence, RI, 2015.
	
	\bibitem[BV83]{BerlineVergne}
	Nicole Berline and Mich\`ele Vergne.
	\newblock Z\'eros d'un champ de vecteurs et classes caract\'eristiques
	\'equivariantes.
	\newblock {\em Duke Math. J.}, 50(2):539--549, 1983.
	
	\bibitem[CLS11]{CLS2011}
	David~A. Cox, John~B. Little, and Henry~K. Schenck.
	\newblock {\em Toric varieties}, volume 124 of {\em Graduate Studies in
		Mathematics}.
	\newblock American Mathematical Society, Providence, RI, 2011.
	
	\bibitem[CLY26]{CLY2026}
	Runze Chen, Zhi Lü, and Leqi Yang.
	\newblock Reduced characteristic number criteria for equivariant bordism of
	{$T^k$}- and $(\mathbb{Z}_2)^k$-manifolds with isolated fixed points{,
		arXiv}: 2607.01889, 2026.
	
	\bibitem[CMS10a]{CMS2010_quasi}
	Suyoung Choi, Mikiya Masuda, and Dong~Y. Suh.
	\newblock Quasitoric manifolds over a product of simplices.
	\newblock {\em Osaka J. Math.}, 47(1):109--129, 2010.
	
	\bibitem[CMS10b]{CMS2010}
	Suyoung Choi, Mikiya Masuda, and Dong~Y. Suh.
	\newblock Topological classification of generalized {B}ott towers.
	\newblock {\em Trans. Amer. Math. Soc.}, 362(2):1097--1112, 2010.
	
	\bibitem[CPS10]{CPS2010}
	Suyoung Choi, Taras~E. Panov, and Dong~Y. Suh.
	\newblock Toric cohomological rigidity of simple convex polytopes.
	\newblock {\em J. Lond. Math. Soc. (2)}, 82(2):343--360, 2010.
	
	\bibitem[CS11]{CS2011}
	Suyoung Choi and Dong~Y. Suh.
	\newblock Properties of {B}ott manifolds and cohomological rigidity.
	\newblock {\em Algebr. Geom. Topol.}, 11(2):1053--1076, 2011.
	
	\bibitem[Dan78]{Danilov1978}
	Vladimir~I. Danilov.
	\newblock The geometry of toric varieties.
	\newblock {\em Uspekhi Mat. Nauk}, 33(2(200)):85--134, 247, 1978.
	
	\bibitem[Dar15]{Darby2015}
	Alastair Darby.
	\newblock Torus manifolds in equivariant complex bordism.
	\newblock {\em Topology Appl.}, 189:31--64, 2015.
	
	\bibitem[DJ91]{DavisJanus1991}
	Michael~W. Davis and Tadeusz Januszkiewicz.
	\newblock Convex polytopes, {C}oxeter orbifolds and torus actions.
	\newblock {\em Duke Math. J.}, 62(2):417--451, 1991.
	
	\bibitem[GGK02]{GGK2002}
	Victor Guillemin, Viktor Ginzburg, and Yael Karshon.
	\newblock {\em Moment maps, cobordisms, and {H}amiltonian group actions},
	volume~98 of {\em Mathematical Surveys and Monographs}.
	\newblock American Mathematical Society, Providence, RI, 2002.
	\newblock Appendix J by Maxim Braverman.
	
	\bibitem[Han05]{Hanke2005}
	B.~Hanke.
	\newblock Geometric versus homotopy theoretic equivariant bordism.
	\newblock {\em Math. Ann.}, 332(3):677--696, 2005.
	
	\bibitem[Har77]{Hartshorne_AG}
	Robin Hartshorne.
	\newblock {\em Algebraic geometry}, volume No. 52 of {\em Graduate Texts in
		Mathematics}.
	\newblock Springer-Verlag, New York-Heidelberg, 1977.
	
	\bibitem[Hir51]{Hirzebruch}
	Friedrich Hirzebruch.
	\newblock \"uber eine {K}lasse von einfachzusammenh\"angenden komplexen
	{M}annigfaltigkeiten.
	\newblock {\em Math. Ann.}, 124:77--86, 1951.
	
	\bibitem[HO72]{HamrickOssa}
	Gary Hamrick and Erich Ossa.
	\newblock Unitary bordism of monogenic groups and isometries.
	\newblock In {\em Proceedings of the {S}econd {C}onference on {C}ompact
		{T}ransformation {G}roups ({U}niv. {M}assachusetts, {A}mherst, {M}ass.,
		1971), {P}art {I}}, volume Vol. 298 of {\em Lecture Notes in Math.}, pages
	172--182. Springer, Berlin-New York, 1972.
	
	\bibitem[Ish12]{Ishida2012}
	Hiroaki Ishida.
	\newblock Filtered cohomological rigidity of {B}ott towers.
	\newblock {\em Osaka J. Math.}, 49(2):515--522, 2012.
	
	\bibitem[KM09]{KamishimaMasuda2009}
	Yoshinobu Kamishima and Mikiya Masuda.
	\newblock Cohomological rigidity of real {B}ott manifolds.
	\newblock {\em Algebr. Geom. Topol.}, 9(4):2479--2502, 2009.
	
	\bibitem[LT11]{LuTan2011}
	Zhi L\"u and Qiangbo Tan.
	\newblock Equivariant {C}hern numbers and the number of fixed points for
	unitary torus manifolds.
	\newblock {\em Math. Res. Lett.}, 18(6):1319--1325, 2011.
	
	\bibitem[LT14]{LuTan2014}
	Zhi L\"u and Qiangbo Tan.
	\newblock Small covers and the equivariant bordism classification of 2-torus
	manifolds.
	\newblock {\em Int. Math. Res. Not. IMRN}, 2014(24):6756--6797, 2014.
	
	\bibitem[LW18]{LuWang2018}
	Zhi L\"u and Wei Wang.
	\newblock Equivariant cohomology {C}hern numbers determine equivariant unitary
	bordism for torus groups.
	\newblock {\em Algebr. Geom. Topol.}, 18(7):4143--4160, 2018.
	
	\bibitem[Mas08]{Masuda2008}
	Mikiya Masuda.
	\newblock Equivariant cohomology distinguishes toric manifolds.
	\newblock {\em Adv. Math.}, 218(6):2005--2012, 2008.
	
	\bibitem[MS08]{MasudaSuh2008}
	Mikiya Masuda and Dong~Youp Suh.
	\newblock Classification problems of toric manifolds via topology.
	\newblock In {\em Toric topology}, volume 460 of {\em Contemp. Math.}, pages
	273--286. Amer. Math. Soc., Providence, RI, 2008.
	
	\bibitem[Oda88]{Oda1988}
	Tadao Oda.
	\newblock {\em Convex bodies and algebraic geometry}, volume~15 of {\em
		Ergebnisse der Mathematik und ihrer Grenzgebiete (3) [Results in Mathematics
		and Related Areas (3)]}.
	\newblock Springer-Verlag, Berlin, 1988.
	\newblock An introduction to the theory of toric varieties, Translated from the
	Japanese.
	
	\bibitem[Sar12]{Sarkar2012}
	Soumen Sarkar.
	\newblock {$\mathbb{T}^2$}-cobordism of quasitoric 4-manifolds.
	\newblock {\em Algebr. Geom. Topol.}, 12(4):2003--2025, 2012.
	
	\bibitem[Sto70]{Stong1970}
	Robert~E. Stong.
	\newblock Equivariant bordism and {$(\mathbb{Z}_2)^k$} actions.
	\newblock {\em Duke Math. J.}, 37(4):779--785, 1970.
	
	\bibitem[tD71]{tomDieck1971}
	Tammo tom Dieck.
	\newblock Characteristic numbers of {$G$}-manifolds. {I}.
	\newblock {\em Invent. Math.}, 13:213--224, 1971.
	
	\bibitem[Wie12]{Wiemeler2012}
	Michael Wiemeler.
	\newblock Remarks on the classification of quasitoric manifolds up to
	equivariant homeomorphism.
	\newblock {\em Arch. Math. (Basel)}, 98(1):71--85, 2012.
	
	\bibitem[Wie13]{Wiemeler2013}
	Michael Wiemeler.
	\newblock Exotic torus manifolds and equivariant smooth structures on
	quasitoric manifolds.
	\newblock {\em Math. Z.}, 273(3-4):1063--1084, 2013.
	
\end{thebibliography}
\end{document}